\pgfplotsset{compat=newest}
\pgfplotsset{every axis/.append style={
		label style={font=\Large},
		tick label style={font=\large}  
}}
\tikzstyle{int}=[draw, fill=black!10, minimum size=5em,thick]
\tikzstyle{init} = [pin edge={to-,thick,black}]
\newcommand{\versus}{\emph{vs.}\xspace}
\gdef\Shortstack{\@ifnextchar[\@Shortstack{\@Shortstack[c]}}
\gdef\@Shortstack[#1]#2{%
	\leavevmode
	\vbox\bgroup
	\baselineskip-\p@\lineskip 3\p@
	\let\mb@l\hss\let\mb@r\hss
	\expandafter\let\csname mb@#1\endcsname\relax
	\let\\\@stackcr\setlength{\baselineskip}{#2}%
	\@ishortstack}
\newcommand\orcidicon[1]{\href{https://orcid.org/#1}{\includegraphics[scale=0.05]{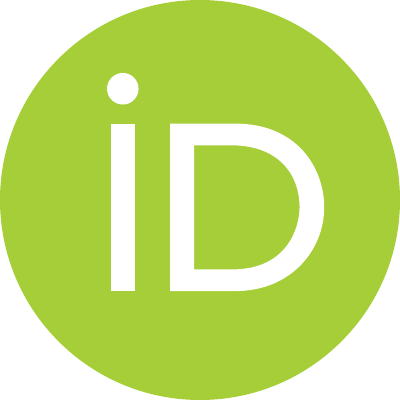}}}
\newcommand{\tradeoff}{communication-computation trade-off\xspace}
\newcommand{\problemName}{{\color{black}Optimal Estimation in Processing Network}} 
\newcommand{\Real}[1]{ { {\mathbb R}^{#1} } }
\newcommand{\Realp}[1]{ { {\mathbb R}^{#1}_+ } }
\DeclareMathOperator*{\argmin}{arg\,min}
\newcommand{\timeUp}{\mathscr{P}}
\newcommand{\update}{\mathscr{U}}
\newcommand{\iteration}{\mathscr{I}}
\theoremstyle{plain}
\newtheorem{thm}{Theorem}
\newtheorem{cor}{Corollary}
\newtheorem{prop}{Proposition}
\theoremstyle{definition}
\newtheorem{definition}{Definition}
\newtheorem{prob}{Problem}
\newtheorem{ass}{Assumption}
\theoremstyle{remark}
\newtheorem{rem}{Remark}
\newtheorem{ex}{Example}
\newcommand{\delayCont}{\tau}
\newcommand{\delayCommCont}{\delayCont_{c}(\delayCont)}
\newcommand{\delayFusCont}{\delayCont_{f}(\delayCont)}
\newcommand{\psteady}{p_{\infty|\infty-\delayCont_{\textit{tot}}}}
\newcommand{\delay}{\delta}
\newcommand{\delayDisc}[1]{\tau_{#1}}
\newcommand{\delaySetDisc}[2]{\mathcal{T}_{#1}^{#2}}
\newcommand{\delayCommDisc}{\delayDisc{c,i}}
\newcommand{\delayFusDisc}{\delayDisc{f,i}}
\newcommand{\delayFusDiscTot}{\delayDisc{\textit{f,tot}}}
\newcommand{\delayDiscTot}[1]{\tilde{\delayDisc{}}_{#1}}
\newcommand{\Psteady}{P_{\infty|\infty-\delayDisc{\textit{tot}}}}
\newcommand{\tr}[1]{\mbox{tr}\left(#1\right)}
\newcommand{\gauss}{\mathcal{N}}
\newcommand{\sensSet}{\mathcal{V}}
\newcommand{\sensNum}{V}
\newcommand{\cardS}{s\xspace}
\renewcommand{\algorithmiccomment}[1]{\bgroup\hfill//~#1\egroup}
\newcommand{\blue}[1]{{\color{blue}#1}}
\newcommand{\revision}[1]{{\color{black}#1}}
\newcommand{\linkToPdf}[1]{\href{#1}{\blue{(pdf)}}}
\newcommand{\linkToPpt}[1]{\href{#1}{\blue{(ppt)}}}
\newcommand{\linkToCode}[1]{\href{#1}{\blue{(code)}}}
\newcommand{\linkToWeb}[1]{\href{#1}{\blue{(web)}}}
\newcommand{\linkToVideo}[1]{\href{#1}{\blue{(video)}}}
\newcommand{\linkToMedia}[1]{\href{#1}{\blue{(media)}}}
\newcommand{\award}[1]{\xspace} 
\newcommand{\eg}{\emph{e.g.,}\xspace}
\newcommand{\ie}{\emph{i.e.,}\xspace}
\begin{document}

	\title{Computation-Communication Trade-offs and Sensor Selection in Real-time Estimation for Processing Networks}

	\author{Luca~Ballotta~{\orcidicon{0000-0002-6521-7142}}, %
	Luca~Schenato~{\orcidicon{0000-0003-2544-2553}},~\IEEEmembership{Fellow,~IEEE}, %
	Luca~Carlone~{\orcidicon{0000-0003-1884-5397}},~\IEEEmembership{Senior~Member,~IEEE} %
	\IEEEcompsocitemizethanks{\IEEEcompsocthanksitem L. Ballotta and L. Schenato are with the Department
		of Information Engineering, University of Padova, Padova, 35131, Italy.\protect\\
		E-mail: {\{ballotta, schenato\}@dei.unipd.it}
	\IEEEcompsocthanksitem L. Carlone is with the Laboratory for Information \& Decision Systems,
		Massachusetts Institute of Technology, Boston, 02139, USA.\protect\\
		E-mail: lcarlone@mit.edu}%
	}

	\IEEEtitleabstractindextext{%

\begin{abstract}     
Recent advances on hardware accelerators and edge computing are enabling substantial processing to be performed at each node 
(e.g., robots, sensors) of a networked system. 
Local processing typically enables data compression and may help mitigate measurement noise, 
but it is still usually slower compared to a central computer
(\ie it entails a larger \emph{computational} delay).
Moreover, while nodes can process the data in parallel, the \revision{computation} at the central computer 
is sequential in nature.
On the other hand, if a node decides to 
send raw data to a central computer for processing, it incurs a \emph{communication} delay. 
This leads to a fundamental \emph{\tradeoff}, where each node has to decide on the optimal amount of local preprocessing 
in order to maximize the network performance.
Here we consider the case where the network is in charge of estimating the state of a dynamical system 
and provide three key contributions.
First, we provide a rigorous problem formulation for optimal real-time estimation in \emph{processing networks},
in the presence of communication and computation delays.
Second, we develop analytical results for the case of a homogeneous network (where all sensors have the same computation) that monitors a 
continuous-time scalar linear system. In particular, we show how to compute the optimal amount of local preprocessing 
to minimize the estimation error and prove that sending raw data is in general suboptimal in the
presence of communication delays.
Third, we consider the realistic case of a heterogeneous network that monitors a discrete-time multi-variate linear system 
and  provide practical algorithms (i) to decide on a suitable preprocessing at each node, 
and (ii) to select a sensor subset when computational constraints make using all sensors suboptimal.
Numerical simulations show that selecting the sensors is crucial: the more may not be the merrier.
Moreover, we show that if the nodes apply the preprocessing policy suggested by our algorithms, they can largely improve the network estimation performance.
\end{abstract}

\begin{IEEEkeywords}
Networked systems, 
Communication latency, 
Processing latency, 
Processing network, 
Resource allocation, 
Sensor fusion, 
Optimal estimation,
Smart sensors.
\end{IEEEkeywords}}

	\maketitle
	\begin{textblock}{10}(3,.05)
		\footnotesize
		\centering
		\setstretch{1}
		This paper has been accepted for publication in IEEE Transactions on Network Science and Engineering, 2020.\\
		Please cite the paper as: L. Ballotta, L. Schenato, and L. Carlone,\\
		``Computation-Communication Trade-offs and Sensor Selection in Real-time Estimation for Processing Networks'',\\
		IEEE Trans. on Network Science and Engineering, 2020	
	\end{textblock}
	\begin{textblock}{10}(3,15.2)
		\footnotesize
		\centering
		\setstretch{1}
		\textcopyright 2020 IEEE.  
		Personal use of this material is permitted.  
		Permission from IEEE must be obtained for all other uses, in any current or future media, including reprinting/republishing this material for advertising or promotional purposes, creating new collective works, for resale or redistribution to servers or lists, or reuse of any copyrighted component of this work in other works.
	\end{textblock}

	\IEEEdisplaynontitleabstractindextext

	\ifCLASSOPTIONpeerreview
		\begin{center} 
			\bfseries EDICS Category: 3-BBND
		\end{center}
	\fi
	
	\IEEEpeerreviewmaketitle


{\section{Introduction}}

\IEEEPARstart{I}{n} the last decade, networked control systems and related domains, such as \emph{Internet of Things} (IoT) 
and \emph{Cyber-Physical Systems} (CPS), have witnessed an unprecedented growth.
The increasing interest towards these systems stems from the broad range of applications they enable:
air-pollution monitoring for city air quality~\cite{8405565}, smart grids~\cite{Pasqualetti11cdc-safeCPS}, robot swarms for target tracking~\cite{4937860}, and autonomous car networks~\cite{Shalev-Shwartz17arxiv-safeDriving}, to mention a few.
On the one hand, the deployment of increasingly powerful communication protocols, such as 5G, carries the promise of further enhancing
the communication capabilities and scale of future networked systems.
On the other hand, advances in electronics, such as embedded GPU-CPU systems~\cite{TX2website} and dedicated hardware~\cite{Suleiman18jssc-navion} for embedded systems, make \textit{edge computing} an appealing and powerful strategy, 
where the nodes in the network can locally preprocess the acquired data, 
saving communication and possibly reducing the workload of a central station collecting and 
post-processing the data.

\begin{figure}
	\centering
	\includegraphics[width=\linewidth]{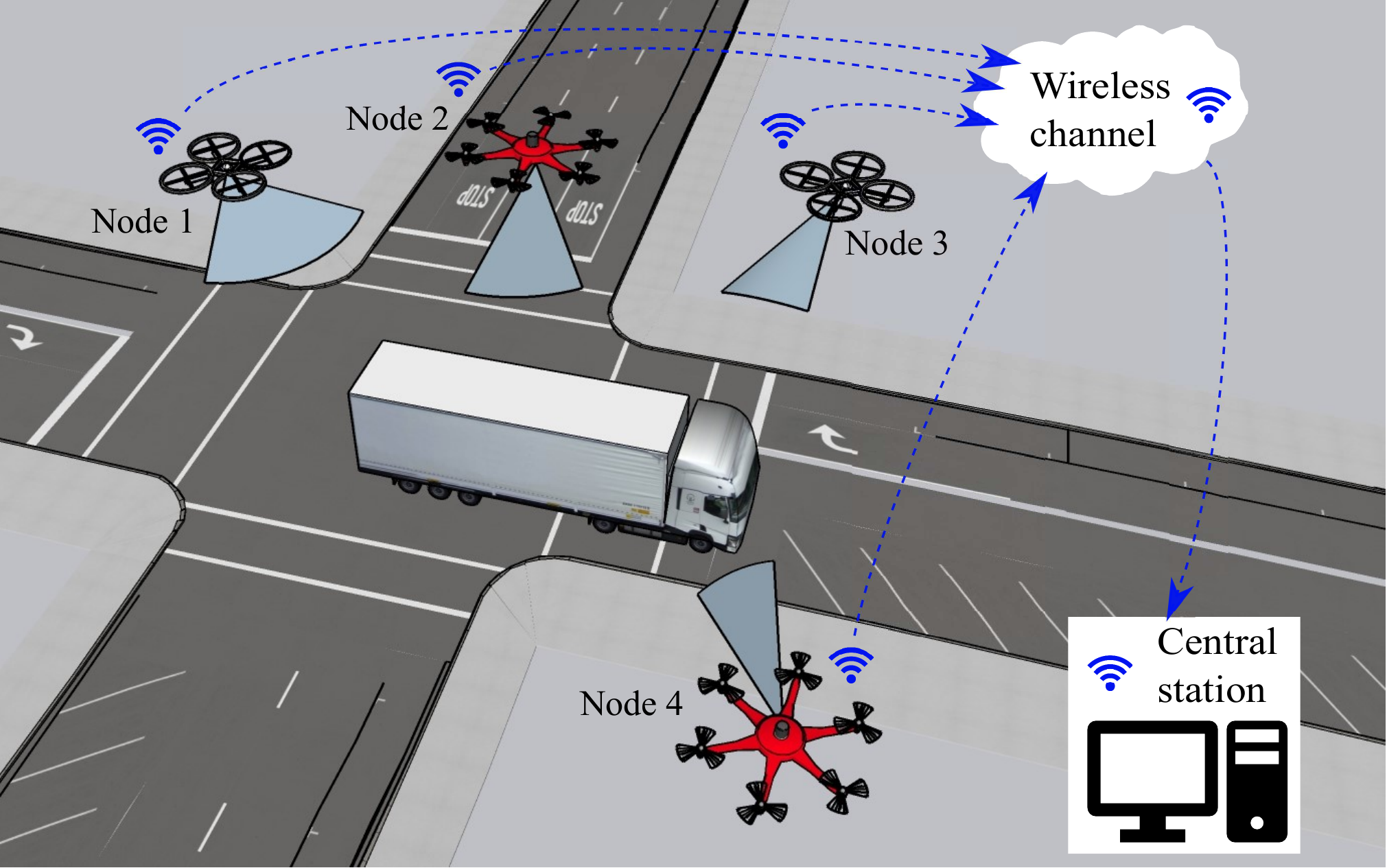}
	\caption{Example of \emph{processing network}: 
	   drones track a moving vehicle in the presence of computation and 
	   communication constraints.
	   Each drone can preprocess the acquired images before transmitting to
	   a central station.\vspace{-4mm}}
	\label{fig:vehicle-tracking}
\end{figure}

The concurrent availability of both efficient wireless connections and low-power processing hardware raises a 
\emph{computation-communication} trade-off. In particular, it is nontrivial whether a node should send raw data to the central station for processing, with negligible computation at the edge but higher communication delays,
or preprocess the data locally, obtaining more accurate and compact information to be transmitted to and processed by the central station.
\autoref{fig:vehicle-tracking} provides an example of this trade-off: 
it depicts a network of drones observing the state of a
truck and transmitting data to a fusion station (bottom-right laptop), in charge of tracking the vehicle.
The sensors may have heterogeneous resources: for instance, the hexarotors (nodes 2 and 4) might have powerful onboard GPU-CPU systems, while the quadrotors (1 and 3) limited processing hardware.
Therefore, some sensors might prefer sending raw data and incur larger communication delays, while other might preprocess the data at the edge. These choices will impact the quality of the truck state estimate: larger computational and communication delays will introduce more uncertainty, hindering the tracking task.
~\autoref{fig:sensorDelaysSubfigures} shows the central station receiving delayed information.

\begin{subfigures}\label{fig:sensorDelaysSubfigures}
	\centering
	\begin{figure}
		\centering
		\includegraphics[width=\columnwidth]{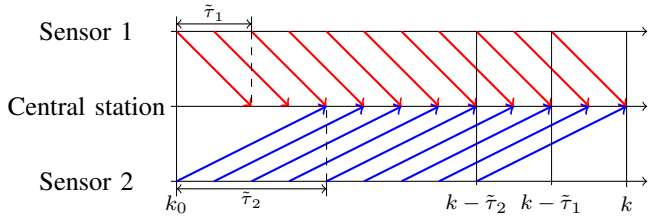}
		\caption{Delayed data transmission with two sensors.}
		\label{fig:sensorDelays}
	\end{figure}
	\begin{figure}
		\vspace{-3mm}
		\centering
		\includegraphics[width=\columnwidth]{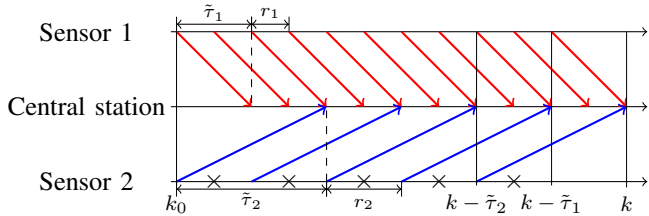}
		\caption{Multi-rate network. Crosses on the bottom axis highlight states observed by sensor 1 but not by sensor 2.}
		\label{fig:sensorDelaysMultirate}
	\end{figure}	
\end{subfigures}

This paper investigates such \tradeoff in a networked system (possibly multi-rate, see acquisition scheme in~\autoref{fig:sensorDelaysMultirate}) monitoring a time-varying phenomenon, modeled as a dynamical system. 

{\bf Related work in IoT} is typically concerned with 
optimizing data transmission through efficient communication policies, mainly using the so-called \emph{Age of Information} (AoI)~\cite{8778671,8469047,10.1145/3209582.3209589}, or the estimation error~\cite{2018arXiv180405618W} as performance metrics. 
In~\cite{8006543}, the \emph{Cost of Update Delays} (CoUD), a non-linear function of AoI, is proposed as an alternative metric. 
The works~\cite{8006543,Bisdikian:2013:QVI:2489253.2489265} introduce the \emph{Value of Information of Update} (VoIU), describing the impact of new samples on the state estimation depending on delay and data correlation.
While these works share some of our motivations, they focus on 
communication/channel properties and on the freshness of the transmitted sensor data, not accounting for 
computation latency nor for the dynamic nature of the monitored system.  
Moreover, our results establish a direct connection between the time constants of the monitored dynamical system and
the optimal computation and communication policy.

{\bf Related work in control and robotics} has mainly focused on other aspects of networked systems.
On the one hand, the control literature extensively investigates the co-design of estimation and control for networked control systems in the presence of communication constraints, such as data rate, quantization, delays, and limited information. The works~\cite{Bertsekas05book,Borkar97cccsp-limitedCommControl,LeNy14tac-limitedCommControl,Shafieepoorfard13cdc-attentionLQG,4118476,6669629}
analyze the trade-offs between communication and control performance (e.g., stability, LQR cost). 
Related work~\cite{Elia01tac-limitedInfoControl,Nair04sicon-rateConstrainedControl} focuses on system stability under limited bandwidth. 
On the other hand, a large body of work studies the design of sensing and actuation. The line of work~\cite{Skelton,Joshi09tsp-sensorSelection,Gupta06automatica,Leny11tac-scheduling,Jawaid15automatica-scheduling,Zhao16cdc-scheduling,Tzoumas16acc-sensorScheduling,Carlone18tro-attentionVIN,Summers16tcns-sensorScheduling,Nozari17acc-scheduling,Summers17arxiv,Summers17arxiv2} investigates 
sensors and actuators selection, in order to maximize some performance metric. For example,~\cite{Skelton} studies a convex LMI formulation, while~\cite{Tatikonda04tac-limitedCommControl,Tzoumas18acc-sLQG} establish a more direct connection between sensing and estimation performance, by proposing co-design approaches for sensing, estimation, and control.
More recent work in robotics~\cite{2019arXiv190205703C} studies computational-offloading in cloud robotics and proposes a learning-based approach.
Departing from the traditional focus on communication constraints, our model also captures \emph{computational delays} and studies the 
fundamental trade-off between computation and communication. 
A few related works investigate computational delays. 
For instance, in~\cite{Tsiatsis2005} the delays of different devices is studied to find an optimal network processing policy.
The work \cite{8757960} characterizes the delays occurring in a network with cloud fog offloading, with case study on Fast Fourier Transform computation,
while~\cite{7800393} investigates multimedia data processing with different architectures.
Contrary to these works, we put an emphasis on the system dynamics,
and inform the computation and communication policy with the parameters of the system being monitored.
For instance, a computational delay of a few seconds is acceptable when monitoring the temperature in a building, 
but it is unacceptable when the goal is to estimate the target of a drone performing aggressive maneuvers 
during a fast pursuit.


\begin{figure}
	\centering
	\begin{minipage}{\linewidth}
		\centering
		\includegraphics[width=.49\textwidth]{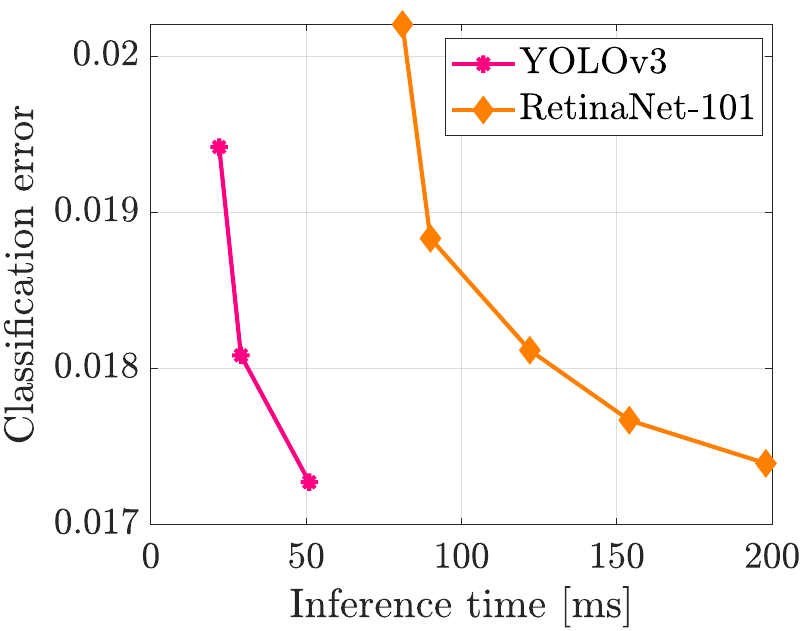}
		\includegraphics[width=.48\textwidth]{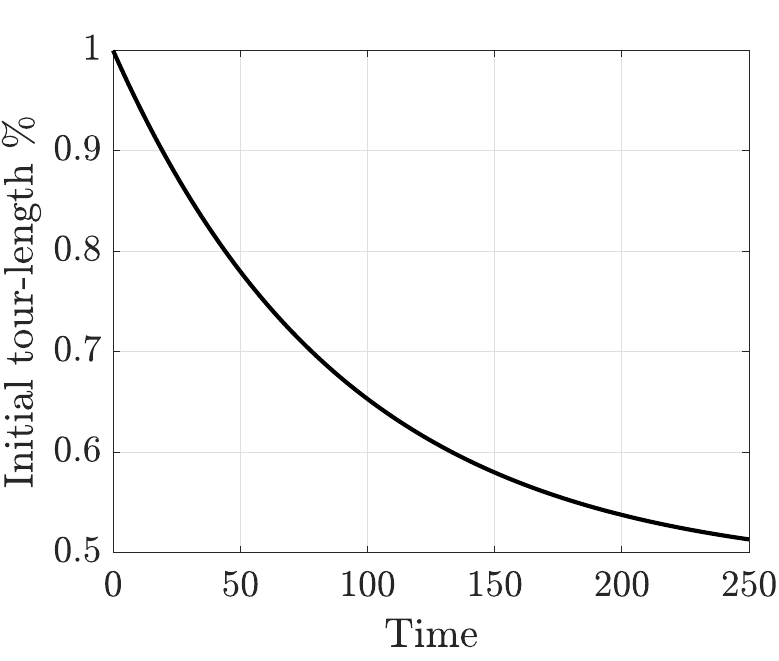}
	\end{minipage}
	\caption{
	    Left: You-Only-Look-Once (YOLO) and RetinaNet are convolutional neural networks that can trade runtime and classification accuracy (errors on y-axis are computed as the inverse of mAP-50 scores).
		Right: Randomized Tour Improvement is a classical greedy algorithm which approximates the optimal tour for the traveling salesman problem, shortening an initial route.
		Adapted from~\cite[Fig.~3]{2018arXiv180402767R},~\cite[Fig.~3]{Zilberstein96ai-anytimeAlgorithms}.}
	\label{fig:anytime-algs}
\end{figure}

{\bf We propose three  contributions}.
First, we introduce a mathematical model for a \emph{processing network}, where nodes 
can perform local computation prior to communicate data to a central station (\autoref{sec:set-up}).
We consider a set of smart sensors in charge of estimating the state of a dynamical system in the presence of communication and computational delays. 
We consider nodes executing \emph{anytime algorithms}~\cite{Zilberstein96ai-anytimeAlgorithms},
\ie whose performance improves with the runtime (as typical descent algorithms in optimization).
Anytime algorithms are popular in computer vision and robotics applications: 
for instance,~\cite{2018arXiv180402767R,NNRuntime} study resource-aware neural networks whose complexity can be traded with runtime (\autoref{fig:anytime-algs}),
~\cite{anytimeKywe} adapts image-processing filters to real-time tasks by varying their kernels,
~\cite{imageCompressionML} proposes a learning-based adaptive image compression,
~\cite{Karaman11ijrr-planning} studies a planning algorithm that asymptotically converges to the optimal solution.
Our model captures such nature of the preprocessing using a computation-dependent measurement noise at each sensor.

Second, we use our model to compute the amount of preprocessing at each node
in the simple case of a homogeneous network (all nodes carry the same computation) 
monitoring a continuous-time scalar linear system (\autoref{sec:hom-net-cont-time}),
and prove that the optimal delay can be computed and characterized analytically.
Also, we show that sending raw data is in general suboptimal in the
presence of communication delays.
Furthermore, we show that in the presence of computational delays --contrarily to conventional wisdom--
using more sensors might hinder estimation performance
(intuitively: processing data from each sensor induces a computational delay, which introduces extra uncertainty in the estimation).

Third, we consider a more realistic heterogeneous network monitoring a 
discrete-time multivariate linear system.
Since using all sensors is not necessarily an optimal policy, we 
extend our formulation to also select an optimal set of sensors, besides deciding the optimal preprocessing at each sensor.
Towards this goal, we first show how to compute an estimation-theoretic cost function to be optimized by the processing network 
(\autoref{sec:discrete-time}), possibly including multi-rate sensors (see~\autoref{fig:sensorDelaysMultirate}).
Then, also leveraging the formal analysis made on homogeneous networks, we propose a greedy algorithm to select the preprocessing and sensors (\autoref{sec:greedy-algorithms}).
Numerical results (\autoref{sec:simulations}) show that (i) our algorithm can indeed compute near-optimal policies, 
(ii) that {using all sensors is in general suboptimal},
and (iii) that the proposed policy can largely improve the network performance. 
We conclude and discuss future work in~\autoref{sec:conclusions}.

\section{Optimal Estimation in Processing Networks: Problem Formulation}
	\label{sec:set-up}

	A \emph{processing network} is a set of interconnected \emph{nodes}\footnote{We often refer to the nodes as \emph{smart sensors} 
	to stress their computational and sensing capabilities.} 
 	that collect
	sensor data and leverage onboard computation to locally preprocess the data before communicating them to a central fusion center.
	In this paper, we consider the case where the network is tasked with
	obtaining an accurate estimate of the state of a time-varying phenomenon observed by the nodes, 
	in the presence of communication and computation latencies.

	\subsection{Anatomy of a Processing Network}\label{sec:set-up-anatomy}

	\begin{description}[leftmargin=0cm]
		\item[Dynamical system:]
		We consider a processing network monitoring a discrete-time dynamic phenomenon described by the following linear time-invariant (LTI) stochastic model:
		\begin{equation}
		x_{k+1} = Ax_k+w_k \label{eq:processModel}
		\end{equation} 
		where $x_k \in \Real{n}$ is the to-be-estimated state of the system at time $k$, 
		$A \in \Real{n\times n}$ is the state matrix, and \revision{$w_k\sim \gauss(0,Q)$} is i.i.d. zero-mean Gaussian white noise 
		with covariance $Q$. 

		\item[Nodes (smart sensors):] 
		The processing network includes \revision{the set of nodes $\sensSet$}. 
		After acquiring raw data, each node may refine them via some local preprocessing. 
		For instance, in the control application of~\autoref{fig:vehicle-tracking}, each drone is a smart sensor that may preprocess raw images to get
		local measurements of the state (the tracked vehicle location and velocity).
		Depending on the available time and computational resources, a sensor may either run different procedures, 
		or adopt an anytime algorithm (e.g., varying the number of visual features~\cite{Hartley04}), to obtain 
		a refined measurement.
		The local measurements produced by all nodes in the network (possibly after some preprocessing) are modeled as:
		\begin{equation}
		\hspace{-1mm}z_k(\delaySetDisc{p}{}) = Cx_k+v_k(\delaySetDisc{p}{}), \quad
		z_k(\delaySetDisc{p}{}) = 
		\begin{bmatrix}
		z_k^{(1)}(\delayDisc{p,1}) \\
		\vdots \\
		z_k^{(\lvert \sensSet \rvert)}(\delayDisc{p,\lvert \sensSet \rvert})
		\end{bmatrix} \label{eq:measurementModel}
		\end{equation}
		where $z_k^{(i)}\in\Real{m_i}$ is the measurement collected at time $k$ by the $i$-th node
		(starting from an initial time $k_0 \le k $),
		$\delayDisc{p,i}$ is the \textit{preprocessing delay} associated with that node,
		\revision{$ C $ describes the state-to-output sensor transformation,}
		and $v_k\sim \gauss(0,R)$ is i.i.d. zero-mean Gaussian noise; $\delaySetDisc{p}{} \doteq \{\delayDisc{p,i}\}_{i\in\sensSet}$ collects all the preprocessing delays, and $z_k$ contains the measurements from all nodes. 
		To capture the anytime nature of the node preprocessing, we model the intensity $R(\delaySetDisc{p}{})$ 
		of the noise $v_{k}$ as a decreasing function of the delays $\delayDisc{p,i}$, that is, the more time a node spends on local preprocessing, the more accurate the resulting data are.
		\revision{In general, nodes with faster processors induce a faster decrease of the uncertainty $ R $, since they 
		can quickly process a larger amount of sensor data (this is formalized in~\autoref{sec:hom-net-cont-time}).}
		
		\item[Communication network:] The nodes transmit preprocessed data to the \emph{central station} for fusion. 

		\revision{
			To account for channel unreliability, we associate to
			the measurement transmitted from the $ i $-th node at time $ k $
			the binary random variable $ \gamma_k^{(i)} \sim \mathcal{B}(\lambda_i) $, which denotes successful reception at the central station.
			Conversely, $ 1-\lambda_i $ is the \emph{packet-loss probability} associated with each transmitted measurement from the $ i $-th node.
			We assume that $ \gamma_k^{(i)} $ and $ \gamma_\ell^{(j)} $ are uncorrelated if $ k\neq\ell $ or $ i\neq j $.
			Finite capacity is modeled as upper bound on the number of data packets per unit time,
			which induces a maximum number of nodes transmitting simultaneously.
		}

		Given limited bandwidth, also data transmission induces a \textit{communication delay} $\delayCommDisc$ (potentially different for each node $i$).
		We consider two possible models for $\delayCommDisc$ as a function of the preprocessing delay $\delayDisc{p,i}$:
		\begin{itemize}
			\item \emph{constant $\delayCommDisc$:} the transmitted number of packets is fixed and does not depend on the amount of preprocessing,
			\revision{but may increase with the dimension of the transmitted data;} 
			\item \emph{decreasing $\delayCommDisc$:} \revision{if nodes \emph{compress} the measurements, a longer preprocessing yields fewer packets to transmit.
			In this case, nodes with more computational resources induce a higher compression rate, leading to a faster decrease of $ \delayCommDisc $.}
		\end{itemize}
		
		\revision{The total delay to preprocess and send the data from the $ i $-th node to the central station is} $ \delayDiscTot{i} \doteq \delayDisc{p,i}+\delayCommDisc$ (see~\autoref{fig:sensorDelays}).
		
		\begin{figure*}
			\centering
			\includegraphics{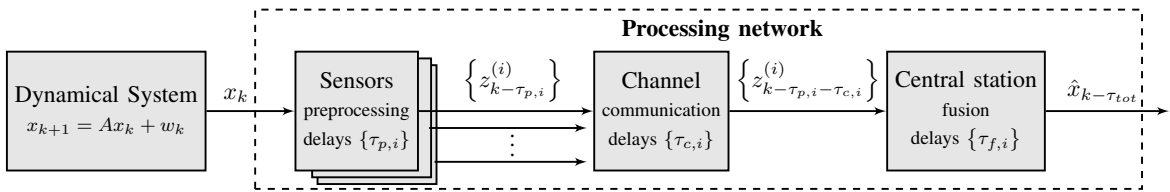}	
			\caption{Block diagram of the processing network with preprocessing, communication and fusion delays.}
			\label{fig:blocks-delays}
		\end{figure*}
		
		\item[Fusion center:] The central station is in charge of fusing all \revision{received} sensor data to compute a state estimate.
		Such centralized processing adds further latency, namely the \textit{fusion delay} $\delayFusDiscTot$, which is the sum of all delays $\delayFusDisc$ required to process the data stream from each node $i$.

		\revision{As for the communication delay, we assume that the fusion delay $\delayFusDisc$ is either constant, or it decreases with the delay $ \delayDisc{p,i} $
		(intuitively, the more preprocessing is done at the node, the less effort is needed for fusion). 
		In the former case, the delay is simply a function of the computational resources at the fusion station.
		In the latter, it might depend on the amount of data compression performed by node $i$.}
		
		\revision{\autoref{fig:blocks-delays} provides an overview of the processing network with the different latency contributions - by node preprocessing, communication, and centralized fusion.
		 As highlighted by the figure, the raw data goes through a number of operations, each one inducing delays.
		 Therefore, the state estimate at time $k$ will not include measurements at time $k$, but only partially outdated measurements collected at time $<k$, due to the delays. 
		 This is formalized in the following definition. } 
		\begin{definition}\label{def:datasets}
			The \emph{processed} dataset at time $k$ is
				\begin{equation}
				\revision{
					\mathcal{Z}_k\!\left(\delaySetDisc{p}{}\right) \!\doteq \!\left\lbrace z_{\ell_i}^{(i)}(\delayDisc{p,i}) \!: \ell_i\!\in\! [k_0,k\!-\!\delayDiscTot{i}\!-\!\delayFusDiscTot], {\gamma_{\ell_i}^{(i)} \!= \!1}\right\rbrace_{i\in\sensSet}
				}
				\end{equation}
				
		\end{definition}
		In words, the processed dataset includes all \revision{correctly received} measurements \emph{except} the most recent ones collected during 
		communication and data processing, \ie during the latest $\delayDisc{p,i}+\delayCommDisc+\delayFusDiscTot$ timestamps.

	\end{description}

		\subsection{Optimal Estimation in Processing Networks}

		In this section we first motivate our interest in optimizing for the amount of preprocessing at each node and 
		the need to select a subset of nodes. Then, we provide a suitable performance metric to measure estimation performance.
		Finally, we put together these elements to formulate the problem of optimal estimation in processing networks 
		(\cref{prob:time-invariant-P-opt}).

		\textbf{Preprocessing selection:} While the sensor data might be received and fused with some (computation and communication) delay, 
		we are interested in obtaining an accurate state estimate at the current time $k$; this entails fusing 
		sensor information $\mathcal{Z}_k(\delaySetDisc{p}{})$ (partially outdated, due to the computation and communication delays) with the open-loop system prediction in~\eqref{eq:processModel}. 
		These delays create a nontrivial \tradeoff: 
		is it best to transmit raw sensor data and incur larger communication and fusion delays,
		or to perform more preprocessing at the edge and transmit more refined (less noisy and more compressed) estimates? 
		For instance, consider again~\autoref{fig:vehicle-tracking} where robots compute local estimates from images. 
		Consider the case in which the local estimates are computed using local features extracted from the images, as 
		common in geometric computer vision~\cite{Hartley00}.
		Each extracted feature both enhances node-side accuracy and possibly reduces transmission and fusion latency. However, feature extraction entails some preprocessing latency at the edge.
		A trade-off emerges: on one hand, many features cause a delayed prediction; on the other hand, few provide poor accuracy.
		An \emph{optimal estimation} policy has to decide the preprocessing at each node in a way to maximize the 
		final estimation accuracy.
		
		\textbf{Sensor selection:}  In addition, the fusion latency increases with the number of sensors transmitting data.
		As a consequence, the amount of open-loop prediction required to compensate for the fusion delay increases with the 
		number of nodes, hence adding more sensors does not necessarily improve performance. 
		Therefore, in order to maximize the estimation accuracy, the network can also 
		decide to use only a subset of sensors $\mathcal{S}\subseteq\sensSet$ (below we refer to those as \emph{active nodes}), 
		such that the state estimate is computed using only data from those sensors, $\mathcal{Z}_k\left(\mathcal{S},\delaySetDisc{p}{\mathcal{S}}\right)\subseteq \mathcal{Z}_k\left(\delaySetDisc{p}{}\right)$, where $\delaySetDisc{p}{\mathcal{S}}$ denotes the set of computational delays for the active nodes.

		\textbf{Performance metric:} 
		In a state estimation problem,
		the estimation performance can be measured as the Mean Squared Error (MSE) of an optimal estimator, \ie 
		$\mbox{Var}\left(x_{k}-\hat{x}_{k}(\delaySetDisc{p}{\mathcal{S}})\right) $, where $\hat{x}_{k}\left(\delaySetDisc{p}{\mathcal{S}}\right) \doteq g\left(\mathcal{Z}_k\left(\mathcal{S},\delaySetDisc{p}{\mathcal{S}}\right)\right)$ is the state estimate from an optimal estimator that uses the reduced processed dataset $\mathcal{Z}_k\left(\mathcal{S},\delaySetDisc{p}{\mathcal{S}}\right)$.
		\revision{We use the Kalman filter, which is the optimal MSE estimator for linear systems with Gaussian noise.}
		However, the optimal filter comes with the nuisance of time variance
		\revision{
			and dependence on the specific packet arrivals, and convergence analysis is not feasible (cf.~\cite{schenatoKalmanFusion,sinopoliKalman}).
			Instead, we resort to the (suboptimal) filter with constant gains (\ie not depending on the arrival-sequence instance),
			and address the steady-state expected performance.
		}
	
		\textbf{Problem formulation:}  We are now ready to formalize the problem of \problemName.
	
		\begin{prob}[\problemName]\label{prob:time-invariant-P-opt}
			Given system~\eqref{eq:processModel} with available sensor set $\sensSet$ and measurement model~\eqref{eq:measurementModel}, find 
			the optimal sensor subset $\mathcal{S}$ (the \emph{active} sensors) and preprocessing delays $\delaySetDisc{p}{\mathcal{S}}$ that minimize the steady-state expected estimation error variance:
			\begin{equation}
			\argmin_{\small\begin{array}{c}
				\mathcal{S}\subseteq\sensSet\\
				\delaySetDisc{p}{\mathcal{S}} = \{\delayDisc{p,i}\}_{i\in\mathcal{S}}\in\mathbb{N}^{|\mathcal{S}|}
				\end{array}} \; \tr{\Psteady\left(\delaySetDisc{p}{\mathcal{S}}\right)}
			\label{prob-2} 
			\end{equation}
			where the total delay $\delayDisc{tot}$ is defined as
			\begin{equation}
				\delayDisc{\textit{tot}}\doteq \underbrace{\min_{i\in\mathcal{S}}\delayDiscTot{i}}_{\doteq\,\delayDiscTot{\textit{min}}} + \underbrace{\sum_{i\in\mathcal{S}}\delayFusDisc}_{\doteq \delayFusDiscTot}
				\label{total-delay}
			\end{equation}
			\revision{
				and the steady-state expected error covariance is
				\begin{equation}\label{eq:errorCovariance}
					\Psteady\left(\delaySetDisc{p}{\mathcal{S}}\right) \doteq \displaystyle\lim_{k\rightarrow +\infty}\!\!
					\mathbb{E}\left[\mbox{Var}\left(x_{k}-\hat{x}_{k}\left(\delaySetDisc{p}{\mathcal{S}}\right)\right)\right]
				\end{equation}
				where the expectation is taken with respect to the sequence  $ \{\gamma_k^{(i)} \, \forall k\ge k_0, \forall i\in\mathcal{S}\} $.\footnote{\revision{We assume that the packet-loss probabilities are small enough so as that the steady-state estimator with constant gains exists.}}
			}
			The delay $\delayDisc{\textit{tot}}$ accounts for the fact that, because of delays, the steady-state estimate relies on partially outdated measurements: 
			$ \delayDiscTot{\textit{min}} $ is the time it takes to receive all processed data from the sensors (including the freshest data collected in $ \mathcal{Z}_k(\mathcal{S},\delaySetDisc{p}{\mathcal{S}}) $), while $\delayFusDiscTot$ is the time it takes to 
			fuse them at the central station.
		\end{prob}
	
		\begin{rem}[Parallel data collection \versus sequential fusion]\label{rem:totalDelay}
			The delay $\delayDiscTot{min}$ is computed as the minimum over the active sensors, as these work in ``parallel'', while the fusion delay $\delayFusDiscTot$ is additive, as in general the fusion center processes all data sequentially. Therefore, the latter is more sensitive to variations of computational delays. Besides, the fusion delay increases with the number of sensors, possibly limiting the network scalability.
		\end{rem}
	
		\begin{rem}[Comparison with sensor selection]\label{rem:constantDelaysInProblem}
			\revision{
				The problem formulation~\eqref{prob-2} differs from standard sensor selection, where each sensor comes with a cost and one aims at maximizing performance under cost constraints~\cite{2018arXiv180208376T,Skelton,Joshi09tsp-sensorSelection,Tzoumas16acc-sensorScheduling}.
				In particular, we are interested in solving the \tradeoff, binding the sensor selection to that of suitable preprocessing delays.
				Therefore, in our setup, rather than associating a cost to each sensor, the 
				penalty in using a sensor is captured by the amount of computation and delay it induces at the fusion station.
			}
		\end{rem}
	
		From now on, we write $\delayDisc{i} = \delayDisc{p,i}$ and $ \delaySetDisc{}{}=\delaySetDisc{p}{\mathcal{S}} $ for the sake of readability.
		Before designing algorithms to solve~\cref{prob:time-invariant-P-opt} (Sections~\ref{sec:discrete-time}--\ref{sec:greedy-algorithms}), 
		we analyze its continuous-time counterpart, which can be solved analytically when the set of sensors is fixed and homogeneous.
		Such simplified approach provides useful insights on the cost function in~\eqref{prob-2}, which are used to tackle the discrete-time case in~\autoref{sec:greedy-algorithms}.

\section{Continuous-time Scalar Analysis}\label{sec:hom-net-cont-time}

We now consider a continuous-time scalar system monitored by a homogeneous network, which is composed of {$ \sensNum $ independent} sensors with equal preprocessing, communication, and fusion delays. In this section we only solve~\cref{prob:time-invariant-P-opt} with respect to the preprocessing delay $ \delayCont $, while the need for sensor selection is motivated in~\autoref{sec:hom-net-cont-time-psteady(N)} with a numerical example.
\revision{
	Also, we assume infinite channel capacity and reliable communication for the sake of simplicity, relaxing such assumptions in Sections~\ref{sec:discrete-time}--\ref{sec:greedy-algorithms}.}
Consider the following continuous-time scalar system:
\begin{equation}\label{eq:processModelCont}
dx_t = ax_tdt + dw_t \qquad  dw_t\sim\gauss(0,\sigma^2_wdt) 
\end{equation}
and the homogeneous-network model
\begin{equation}\label{homogeneous-network}
z_t(\delayCont) = \mathbbm{1}_\sensNum{c}\,x_t + v_t(\delayCont) \quad v_t(\delayCont) \sim \gauss\left(0,I_\sensNum\sigma^2_v(\delayCont)\right)
\end{equation}
where \revision{$ a $ describes the state dynamics, $w_t$ is the process noise, and $\sigma^2_w$ is its variance;
$ \mathbbm{1}_\sensNum $ is the vector of ones and $ c $ and $ \sigma^2_v(\delayCont) $ are scalars modeling the noisy state-output transformation of each sensor. The symbol $I_\sensNum$ denotes the identity matrix of size $\sensNum$.
The vector $ z_t(\delayCont)\in\Real{\sensNum} $ collects all the measurements from the $ \sensNum $ sensors and $ v_t(\delayCont) $ is the overall measurement noise, with covariance matrix $ I_\sensNum \sigma^2_v(\delayCont) $.}

\revision{The anytime nature of the local processing at each node is captured by making the measurement noise covariance $\sigma^2_v(\delayCont)$  
a decreasing function  of the preprocessing delay $\delayCont$.}
Since the Least-Squares-estimation-error variance is inversely proportional to the number of collected samples at each node, 
we opted for the following model:
\begin{equation}\label{R-model-cont-time-hom-net}
\sigma^2_v(\delayCont) = \frac{b}{\delayCont} \qquad b > 0
\end{equation}
\revision{The coefficient $ b $ depends on the node parameters: on the one hand, nodes with large computational resources improve quickly their output accuracy, yielding a small $ b $; on the other hand, if the collected raw data are heavy (e.g., images), refining them takes more time, inducing a larger $b$.}
Communication and fusion delays $ \delayCommCont, \delayFusCont $ are given as: \\
\begin{subequations}\label{comm-fus-del}
	\begin{tabularx}{.95\linewidth}{p{.4\linewidth}p{.48\linewidth}}
		\begin{equation}\label{comm-fus-del-const}
			\hspace{-2.5cm}
			\mbox{\hspace{-1cm}constant}:\begin{cases}
			\delayCommCont\equiv\delayCont_{c} \\
			\delayFusCont\equiv\delayCont_{f}
			\end{cases}
			\hspace{-2.4cm}
		\end{equation}
		&
		\begin{equation}\label{comm-fus-del-var}
			\hspace{-20mm}
			\delayCont\mbox{-varying}:\begin{cases}
			\delayCommCont=\frac{c}{\delayCont} \\
			\delayFusCont=\frac{f}{\delayCont}
			\end{cases}
			\hspace{-1.3cm}
		\end{equation}
	\end{tabularx}
\end{subequations}
\revision{where the delays are either given constants $\delayCont_{c},\delayCont_{f}$ as in eq.~\eqref{comm-fus-del-const}, 
or are inversely proportional to the preprocessing delay (with given coefficients $c$ and $f$), as in eq.~\eqref{comm-fus-del-var}.}
\revision{
	We assume $ \delayCont_{c},\ \delayCont_{f}, \ c $ and $ f $ to be positive and known.
	Both communication and fusion compression coefficients $ c $ and $ f $ increase with the dimension of the raw measurements.
	Conversely, sensors with more computational resources can compress faster and induce smaller coefficients.
}
\begin{rem}
	While the models in~\eqref{comm-fus-del-var} are mainly used for mathematical convenience, in a real setup the
	compression functions might be learned or estimated from data, e.g.~\cite{imageCompressionML}.
\end{rem}

In a homogeneous network, the total delay simplifies to (cf.~\eqref{total-delay} when all nodes are active and have the same delays)
\begin{equation}
\delayCont_{\textit{tot}} = \delayCont+\delayCommCont+\delayFusCont \sensNum\label{total-delay-hom}
\end{equation}
Note the linear dependence of the total fusion delay on the sensor amount $ \sensNum $.
In such setup,~\cref{prob:time-invariant-P-opt} simplifies to the following formulation, where we neglect sensor selection to focus on the computation of the optimal preprocessing delay.

\begin{prob}[Optimal Estimation in Continuous-time Processing Network]\label{prob:hom-network}
	Given system~\eqref{eq:processModelCont} with $ \sensNum $ identical sensors and measurement model~\eqref{homogeneous-network}, find the optimal preprocessing delay $\delayCont$ that minimizes the steady-state expected estimation error variance:
	\begin{equation}
	\argmin_{\delayCont \in\Realp{}}\;\psteady(\delayCont)
	\end{equation}
\end{prob}

It turns out that~\cref{prob:hom-network} has a unique analytical solution, as formalized by the following theorem.

\begin{thm}[Optimal preprocessing for continuous-time homogeneous network,~\cite{ballotta19ifac}]\label{thm-scalar-total}
	Consider the LTI system~\eqref{eq:processModelCont}--\eqref{homogeneous-network} with
	measurement noise variance \revision{$\sigma^2_v(\delayCont)$} as per~\eqref{R-model-cont-time-hom-net},
	communication and fusion delays $\delayCommCont$, $\delayFusCont$ as per~\eqref{comm-fus-del-const} or~\eqref{comm-fus-del-var}
	and initial condition ${x_{t_0}\sim\mathcal{N}(\mu_0,p_0)} $. Assume $ \hat{x}_{t}(\delayCont) $ is the Kalman-filter estimate at time $ t $ given measurements collected until time $ t-\delayCont_{\textit{tot}} $. Then, the steady-state error variance $ \psteady(\delayCont) $ is
	\begin{equation}\label{eq:psteadyCont}
	\psteady(\delayCont) = \underbrace{\mbox{e}^{2a \delayCont_{\textit{tot}} }p_{\infty}(\delayCont)}_{\doteq f(\delayCont)}+\underbrace{\frac{\sigma^2_w}{2a}\left(\mbox{e}^{2a \delayCont_{\textit{tot}} }-1\right)}_{\doteq q(\delayCont)}
	\end{equation}
	where
	\begin{equation}
	p_{\infty}(\delayCont) = \frac{\tilde{b}}{\delayCont}\left(a+\sqrt{a^2+\frac{\sigma^2_w}{\tilde{b}}\delayCont}\right) \quad
	\revision{\tilde{b}\doteq\frac{b}{\sensNum c^2}}
	\end{equation}
	with limits
	\begin{align}
	\!\begin{split}
	\!\lim_{\delayCont \rightarrow 0^+} \psteady(\delayCont) \!=\! \lim_{\delayCont \rightarrow +\infty} \psteady(\delayCont) \!=\! \begin{cases}
	+\infty, &\! a\ge 0\\
	\dfrac{\sigma^2_w}{2|a|}, &\! a<0 \end{cases}
	\end{split}
	\label{error-variance-limits-scalar}
	\end{align} 
	and has a unique global minimum at $\delayCont_{{\textit{opt}}}>0$.
	\revision{Finally, when the delays $\delayCommCont$ and $ \delayFusCont $ are constant, as per~\eqref{comm-fus-del-const}, $\delayCont_{\textit{{\textit{opt}}}}$ satisfies:}
	\begin{equation}
	\frac{\sigma^2_w}{\tilde{b}}\delayCont_{\textit{{\textit{opt}}}}^3 = -a^2\delayCont_{\textit{{\textit{opt}}}}^2+\frac{1}{4} \label{3rd-degree-eq-scalar-general}
	\end{equation}
\end{thm}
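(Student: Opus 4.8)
The plan is to decompose the steady-state error into a \emph{filtering} contribution (the error in estimating $x_{t-\delayCont_{\textit{tot}}}$ from all data received up to that instant) and a \emph{prediction} contribution (the error incurred by propagating that estimate forward over the window $\delayCont_{\textit{tot}}$ through the open-loop dynamics). First I would observe that the $\sensNum$ independent homogeneous sensors in~\eqref{homogeneous-network} carry the same information as a single scalar channel with information rate $\sensNum c^2/\sigma_v^2(\delayCont) = \delayCont/\tilde{b}$, so the steady-state Kalman--Bucy filtering variance $p_\infty(\delayCont)$ solves the scalar continuous-time algebraic Riccati equation $2ap_\infty - (\delayCont/\tilde{b})\,p_\infty^2 + \sigma_w^2 = 0$. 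Taking the positive root of this quadratic yields exactly the claimed expression for $p_\infty(\delayCont)$.

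Next I would handle the prediction step. Integrating~\eqref{eq:processModelCont} over $[t-\delayCont_{\textit{tot}},t]$ gives $x_t = \mbox{e}^{a\delayCont_{\textit{tot}}}x_{t-\delayCont_{\textit{tot}}} + \int_{t-\delayCont_{\textit{tot}}}^{t}\mbox{e}^{a(t-s)}dw_s$, so the optimal predictor is $\hat{x}_t = \mbox{e}^{a\delayCont_{\textit{tot}}}\hat{x}_{t-\delayCont_{\textit{tot}}}$ and its error variance is the sum of the scaled filtering error $\mbox{e}^{2a\delayCont_{\textit{tot}}}p_\infty(\delayCont)$ and the accumulated process noise $\sigma_w^2\int_0^{\delayCont_{\textit{tot}}}\mbox{e}^{2au}du = (\sigma_w^2/2a)(\mbox{e}^{2a\delayCont_{\textit{tot}}}-1)$, reproducing~\eqref{eq:psteadyCont}. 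The boundary limits in~\eqref{error-variance-limits-scalar} then follow from asymptotic expansions: as $\delayCont\to+\infty$ one has $p_\infty(\delayCont)\sim\sqrt{\tilde{b}\sigma_w^2/\delayCont}\to0$ while $\delayCont_{\textit{tot}}\to+\infty$, and as $\delayCont\to0^+$ one has $p_\infty(\delayCont)\to+\infty$ when $a\ge0$ but $p_\infty(\delayCont)\to\sigma_w^2/(2|a|)$ when $a<0$. The only subtlety is the constant-delay case~\eqref{comm-fus-del-const} with $a<0$ and $\delayCont\to0^+$, where $\delayCont_{\textit{tot}}$ stays finite: here the two $\mbox{e}^{2a\delayCont_{\textit{tot}}}$ terms in~\eqref{eq:psteadyCont} cancel (using $\sigma_w^2/(2a) = -\sigma_w^2/(2|a|)$) and leave precisely $\sigma_w^2/(2|a|)$.

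For the unique minimizer in the constant-delay case, where $\delayCont_{\textit{tot}}' = 1$, I would write $\psteady(\delayCont) = \mbox{e}^{2a\delayCont_{\textit{tot}}}\left(p_\infty(\delayCont)+\sigma_w^2/2a\right) - \sigma_w^2/2a$, differentiate, and factor out the strictly positive $\mbox{e}^{2a\delayCont_{\textit{tot}}}$ to reduce stationarity to $p_\infty'(\delayCont) + 2ap_\infty(\delayCont) + \sigma_w^2 = 0$. Using the Riccati identity $2ap_\infty + \sigma_w^2 = (\delayCont/\tilde{b})p_\infty^2$ and the expression for $p_\infty'$ obtained by implicitly differentiating the Riccati equation, this collapses to $p_\infty = \tilde{b}(1+2a\delayCont)/(2\delayCont^2)$; substituting back into the Riccati equation and clearing denominators yields the cubic~\eqref{3rd-degree-eq-scalar-general}. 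I would then establish uniqueness: the polynomial $\phi(\delayCont) \doteq (\sigma_w^2/\tilde{b})\delayCont^3 + a^2\delayCont^2 - 1/4$ satisfies $\phi(0)<0$, $\phi(+\infty)=+\infty$, and $\phi'(\delayCont)>0$ for all $\delayCont>0$, so it has exactly one positive root, which combined with the boundary limits must be the global minimizer.

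The main obstacle I anticipate is rigorously certifying that the unique stationary point is a \emph{minimum} rather than a maximum, particularly when $a<0$ where $\psteady$ has equal finite limits at both endpoints and one cannot simply invoke a ``$+\infty$ at both ends'' argument; here one would verify the sign of the second derivative at the root, or argue that $\psteady$ drops strictly below its common boundary value. A secondary difficulty is the $\delayCont$-varying model~\eqref{comm-fus-del-var}, where $\delayCont_{\textit{tot}}' = 1-(c+f\sensNum)/\delayCont^2$ no longer factors the derivative cleanly, so stationarity does not reduce to the tidy cubic~\eqref{3rd-degree-eq-scalar-general} and both existence and uniqueness of the minimizer must be argued through a separate monotonicity analysis of $\psteady'$.
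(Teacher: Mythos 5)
Your proposal is correct and follows essentially the same route as the paper's proof: the same decomposition into the steady-state filtering variance $p_{\infty}(\delayCont)$ solving the scalar ARE plus the open-loop prediction error accumulated over $\delayCont_{\textit{tot}}$, the same uncorrelatedness argument giving~\eqref{eq:psteadyCont}, the same limit computations, and the same stationarity calculation yielding the cubic~\eqref{3rd-degree-eq-scalar-general} for constant delays. The two difficulties you flag --- certifying that the unique stationary point is a minimum when $a<0$, and the lack of a clean cubic in the $\delayCont$-varying model~\eqref{comm-fus-del-var} --- are precisely the points the paper dispatches by asserting quasi-convexity of $\psteady(\delayCont)$ via ``graphical analysis,'' so your explicit sign analysis of $\psteady'$ (which for constant delays factors as a positive term times $\delayCont - \bigl(2\sqrt{a^2+\sigma^2_w\delayCont/\tilde{b}}\bigr)^{-1}$, strictly increasing and changing sign once) would only make the argument more rigorous than the paper's own.
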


\begin{proof}
	See appendix~\ref{app:proof-thm-scalar-total} in the supplementary material.
\end{proof}

\begin{figure}
	\centering
	\includegraphics[width=0.7\linewidth]{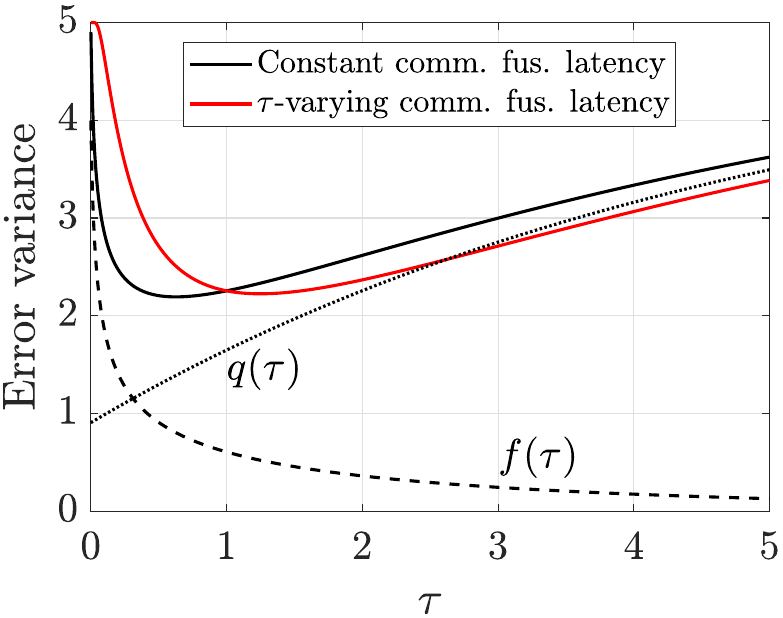}
	\caption{Representation of variance $ \psteady(\delayCont) $.} 
	\label{fig:poftautwoparts}
\end{figure}

The proof exploits quasi-convexity of the expected variance $ \psteady(\delayCont) $.
\autoref{fig:poftautwoparts} illustrates the cost function with the two models for communication and fusion delays (black for constant and red for $\delayCont$-varying) for an asymptotically stable system; for the former, the contributions due to estimation $ f(\delayCont) $  and \revision{ to process noise $ q(\delayCont) $ as given in~\eqref{eq:psteadyCont} are shown as dashed and dotted lines, respectively.} 
The solid curves cross, the red one being lower for $ \delayCont > 1 $, suggesting that compressing data at fixed rate is convenient if the preprocessing delay is kept below a certain threshold. 

Eq.~\eqref{3rd-degree-eq-scalar-general} allows for a closed-form computation of $\delayCont_{\textit{{\textit{opt}}}}$ if model~\eqref{comm-fus-del-const} holds. In general, being the variance $ \psteady(\delayCont) $ quasi-convex, a numerical solution can be computed efficiently.
Optimal preprocessing with other models for $ \sigma^2_v(\delayCont) $ is discussed in Appendix~\ref{sec:other-functions} in the supplementary material.

\begin{ex}[Brownian systems]
	One interesting case arises when the system~\eqref{eq:processModelCont} describes a Brownian motion:
	\begin{equation}
	dx_t = dw_t \label{scalar-system-a-eq-0}
	\end{equation}
	In this situation, the optimal delay has a simple expression.
	
	\begin{cor}[Brownian motion]
			Given system~\eqref{scalar-system-a-eq-0} and \eqref{homogeneous-network} and hypotheses as per~\cref{thm-scalar-total}, the steady-state expected error variance has the following expression:
			\begin{equation}
			\psteady(\delayCont) = \underbrace{\sqrt{\dfrac{\tilde{b}\sigma^2_w}{\delayCont}}}_{f(\delayCont)} + \underbrace{\sigma^2_w\delayCont}_{q(\delayCont)}
			\end{equation}
			admitting the unique global minimum
			\begin{equation}
			\delayCont^\text{B}_{\textit{opt}} = \sqrt[3]{\dfrac{\tilde{b}}{4\sigma^2_w}}
			\label{tmin-a-eq-0}
			\end{equation}
	\end{cor}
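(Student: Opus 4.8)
The plan is to obtain this Corollary as the $a\to 0$ specialization of~\cref{thm-scalar-total}, being careful that substituting $a=0$ naively into both $f(\delayCont)$ and $q(\delayCont)$ yields indeterminate expressions. First I would handle the estimation term: since $\mbox{e}^{2a\delayCont_{\textit{tot}}}\to 1$ as $a\to 0$, it suffices to evaluate $p_{\infty}(\delayCont)$ at $a=0$. The bracketed factor $a+\sqrt{a^2+(\sigma^2_w/\tilde{b})\delayCont}$ collapses to $\sqrt{(\sigma^2_w/\tilde{b})\delayCont}$, giving $p_{\infty}(\delayCont)=\sqrt{\tilde{b}\sigma^2_w/\delayCont}$, which is precisely the claimed $f(\delayCont)$; this step is a clean substitution with no indeterminacy.

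The delicate part is the process-noise term $q(\delayCont)=\frac{\sigma^2_w}{2a}\bigl(\mbox{e}^{2a\delayCont_{\textit{tot}}}-1\bigr)$, which is genuinely of the form $0/0$ at $a=0$. I would resolve it by a first-order expansion (equivalently, L'H\^opital in $a$), using $\mbox{e}^{2a\delayCont_{\textit{tot}}}-1=2a\delayCont_{\textit{tot}}+O(a^2)$, to get $\lim_{a\to 0}q(\delayCont)=\sigma^2_w\delayCont_{\textit{tot}}$. With the constant-delay model~\eqref{comm-fus-del-const}, $\delayCont_{\textit{tot}}=\delayCont+\delayCont_{c}+\delayCont_{f}\sensNum$, so the fixed communication and fusion delays contribute only the additive constant $\sigma^2_w(\delayCont_{c}+\delayCont_{f}\sensNum)$, which shifts the cost vertically but leaves the minimizer unchanged; dropping it (equivalently setting these delays to zero) recovers the displayed $q(\delayCont)=\sigma^2_w\delayCont$. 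Summing the two contributions yields the stated closed form for $\psteady(\delayCont)$. This limit is the main obstacle, since it is the only point where the $a=0$ degeneracy is real rather than a removable substitution.

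Finally, for the minimizer I would take the quickest route and specialize the optimality condition~\eqref{3rd-degree-eq-scalar-general} to $a=0$, which immediately gives $\frac{\sigma^2_w}{\tilde{b}}(\delayCont^{\text{B}}_{\textit{opt}})^3=\tfrac14$, i.e.~$\delayCont^{\text{B}}_{\textit{opt}}=\sqrt[3]{\tilde{b}/(4\sigma^2_w)}$. As a self-contained check one may instead differentiate $\psteady(\delayCont)=\sqrt{\tilde{b}\sigma^2_w}\,\delayCont^{-1/2}+\sigma^2_w\delayCont$ directly: setting $-\tfrac12\sqrt{\tilde{b}\sigma^2_w}\,\delayCont^{-3/2}+\sigma^2_w=0$ reproduces the same cubic. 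That this stationary point is the unique global minimum over $\delayCont\in\Realp{}$ follows from the quasi-convexity established in~\cref{thm-scalar-total}; alternatively it can be seen directly, since $\psteady$ is the sum of a strictly convex, decreasing term and an increasing linear term, hence strictly convex on $(0,+\infty)$, with both endpoint limits diverging to $+\infty$ as predicted by~\eqref{error-variance-limits-scalar} for $a\ge 0$.
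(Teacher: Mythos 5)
Your proof is correct and takes essentially the same route as the paper, which states this corollary as a direct specialization of Theorem~1 to $a=0$ without writing out a separate proof: substitute $a=0$ into $f(\tau)$, resolve the $0/0$ limit in $q(\tau)$, and set $a=0$ in the cubic optimality condition~\eqref{3rd-degree-eq-scalar-general}. Your explicit treatment of the additive constant $\sigma^2_w(\tau_{c}+\tau_{f}V)$ arising from $\tau_{\textit{tot}}$ under the constant-delay model --- which the paper's statement silently drops since it does not affect the minimizer --- only makes the argument more complete than the paper's implicit one.
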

	
	The cubic root in \eqref{tmin-a-eq-0} strongly reduces the parametric sensitivity of $ \delayCont^\text{B}_{\textit{opt}}$, which may help with uncertain models.
\end{ex}

\subsection{\revision{Sensitivity of Optimal Preprocessing}} \label{sec:param-dependence}

Based on eq.~\eqref{3rd-degree-eq-scalar-general}, with constant delays~\eqref{comm-fus-del-const} the behavior of the optimal delay $\delayCont_{\textit{opt}}$ can be analyzed as a function of the system parameters. 
In particular, $ \sigma^2_w $ and $\tilde{b}$ do not act independently, so we can focus on their ratio $ \rho\doteq\nicefrac{\sigma^2_w}{\tilde{b}} $.

\begin{prop}\label{prop-params}
	Let $ \delayCont_{\textit{opt}} $ be the solution of~\eqref{3rd-degree-eq-scalar-general} with $ \delayCommCont $, $ \delayFusCont $ as per~\eqref{comm-fus-del-const};
	then, $ \delayCont_{\textit{opt}} $ is strictly decreasing with $\rho$ and $a^2$. 
\end{prop}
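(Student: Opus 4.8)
The plan is to treat~\eqref{3rd-degree-eq-scalar-general} as an implicitly defined relation and apply the implicit function theorem. First I would introduce the abbreviation $\alpha\doteq a^2$ and collect every term on one side, defining
\begin{equation}
F(\delayCont,\rho,\alpha)\doteq\rho\,\delayCont^3+\alpha\,\delayCont^2-\tfrac{1}{4},
\end{equation}
so that~\eqref{3rd-degree-eq-scalar-general} becomes exactly $F(\delayCont_{\textit{opt}},\rho,\alpha)=0$. From~\cref{thm-scalar-total} I know that for every admissible pair $(\rho,\alpha)$, with $\rho>0$ and $\alpha\ge 0$, this equation has a unique positive root, namely $\delayCont_{\textit{opt}}$: indeed $F(0,\rho,\alpha)=-\tfrac14<0$, $F\to+\infty$ as $\delayCont\to+\infty$, and $F$ is strictly increasing in $\delayCont$ on $\Realp{}$, so the root is well defined, isolated, and a single-valued function of the parameters.

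The key step is to differentiate $F$. I would compute
\begin{equation}
\frac{\partial F}{\partial \delayCont}=3\rho\,\delayCont^2+2\alpha\,\delayCont,\qquad
\frac{\partial F}{\partial \rho}=\delayCont^3,\qquad
\frac{\partial F}{\partial \alpha}=\delayCont^2.
\end{equation}
For $\delayCont>0$, $\rho>0$ and $\alpha\ge 0$ the first partial is strictly positive, which both guarantees that the implicit function theorem applies at the root and shows that $\delayCont_{\textit{opt}}$ is a smooth function of $(\rho,\alpha)$ in a neighbourhood of every admissible point. The remaining two partials are strictly positive as well, since $\delayCont_{\textit{opt}}>0$.

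Finally I would invoke the implicit function theorem to obtain
\begin{equation}
\frac{\partial \delayCont_{\textit{opt}}}{\partial \rho}
=-\frac{\partial F/\partial\rho}{\partial F/\partial\delayCont}
=-\frac{\delayCont_{\textit{opt}}^3}{3\rho\,\delayCont_{\textit{opt}}^2+2\alpha\,\delayCont_{\textit{opt}}}<0,
\qquad
\frac{\partial \delayCont_{\textit{opt}}}{\partial \alpha}
=-\frac{\delayCont_{\textit{opt}}^2}{3\rho\,\delayCont_{\textit{opt}}^2+2\alpha\,\delayCont_{\textit{opt}}}<0.
\end{equation}
Both derivatives are strictly negative at every admissible point; since $\delayCont_{\textit{opt}}$ is single-valued and smooth in the parameters, everywhere-negative partials yield \emph{global} strict monotonicity, so $\delayCont_{\textit{opt}}$ is strictly decreasing in $\rho$ (with $\alpha=a^2$ held fixed) and strictly decreasing in $a^2$ (with $\rho$ held fixed), which is the claim. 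I do not expect any serious obstacle here: the argument is a clean application of the implicit function theorem, and the only point requiring care is verifying that the denominator $\partial F/\partial\delayCont$ never vanishes on the relevant domain — this is precisely the strict monotonicity of $F$ in $\delayCont$ already used above to establish uniqueness of the root, so it comes essentially for free.
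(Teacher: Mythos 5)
Your proposal is correct and takes essentially the same route as the paper's own proof: both recast~\eqref{3rd-degree-eq-scalar-general} as $\rho\,\delayCont^3 + a^2\delayCont^2 - \tfrac{1}{4} = 0$, verify that the partial derivative with respect to $\delayCont$, namely $3\rho\,\delayCont^2 + 2a^2\delayCont$, is strictly positive at the positive root, and then apply the implicit function theorem to obtain the same strictly negative derivatives of $\delayCont_{\textit{opt}}$ with respect to $\rho$ and $a^2$. The only cosmetic difference is that you treat the two parameters jointly as a function of $(\rho,a^2)$, whereas the paper fixes one and varies the other; the substance of the argument is identical.
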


\begin{proof}
	See Appendix~\ref{app:proof-prop-params} in the supplementary material.
\end{proof}	
	
On the one hand,~\cref{prop-params} states that it is more convenient to reduce the preprocessing for 
``unpredictable systems'', characterized by fast dynamics or large process noise.
On the other hand, if the sensor noise is large, it is better to further refine the measurements,
which explains why $\delayCont_{\textit{opt}}$ grows with $b$.
\revision{Also, since the parameter $\tilde{b}$ is inversely proportional to the number of sensors $\sensNum$,
then $\delayCont_{\textit{opt}}$ also decreases with $\sensNum$:}
the more data are provided, the less preprocessing is needed to extract accurate information.\\
\autoref{fig:params-dependence} shows the typical behaviour of $\delayCont_{\textit{opt}}$ with respect to the system parameters.

\begin{figure}
	\centering
	\includegraphics[width=0.7\linewidth]{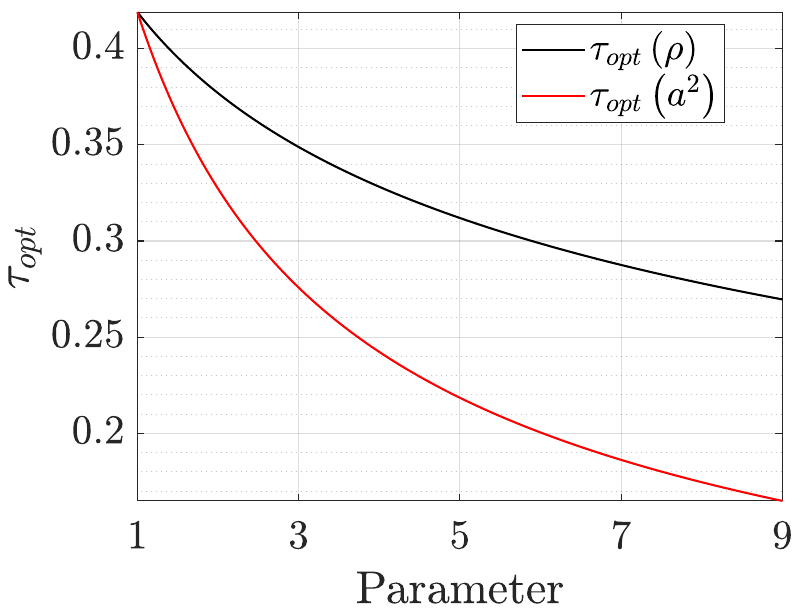}
	\caption{Optimal delay $\delayCont_{\textit{opt}}$ as a function of $ \rho $ ($ a^2=1 $) and $ a^2 $ ($ \rho=1 $).}
	\label{fig:params-dependence}
\end{figure}

\begin{rem}(Insights from continuous-time scalar case)\label{rem:insights}
	The analysis on continuous-time homogeneous networks yields two important insights.
	Firstly, the cost function is quasi-convex. This is exploited in~\autoref{sec:comp-wise-descent} to design a descent strategy
	optimizing the preprocessing delays of a given sensor subset.
	\revision{Secondly, using all sensors is not necessarily an optimal strategy, and -- in the presence of fusion delays -- 
		using a subset of the sensors leads to optimal estimation performance.}
	\revision{This justifies our formulation in~\cref{prob:time-invariant-P-opt}
		(with the selection of a suitable sensor subset) and motivates the design of the 
		greedy sensor selection approach in~\autoref{sec:sensor-selection}.}
\end{rem}

\subsection{\revision{Homogeneous Network: Performance \versus Sensors}}\label{sec:hom-net-cont-time-psteady(N)}

\begin{figure}
	\centering
	\begin{tikzpicture}[scale=.7]
	\begin{axis}[grid=major,domain=1:10,ymax=3.4,ylabel=$\psteady$,xlabel=Number of sensors $ \cardS $,xtick={1,2,3,4,5,6,7,8,9,10},legend style={at={(.2,.9)},anchor=west}]
	\addplot+[mark=square*,mark size=3,mark options={fill=black},samples=10,color=black,only marks] {1/10*exp(-2*.1-2*.1-2*0.02*x)*(1/.1/x)*(-1+sqrt(1+x*10*.1*10))-10/2*(exp(-2*.1-2*.1-2*0.02*x)-1)};
	\addlegendentry{W/ fusion delay}
	\addplot+[mark=square*,mark size=3,mark options={fill=red},samples=10,color=red,only marks]
	{1/10*exp(-2*.1-2*.1)*(1/.1/x)*(-1+sqrt(1+x*10*.1*10))-10/2*(exp(-2*.1-2*.1)-1)};
	\addlegendentry{W/o fusion delay}
	\end{axis}
	\end{tikzpicture}
	\caption{Variance $\psteady(\cardS)$ with fixed delays and varying number of sensors. The fusion delays are $\delayFusCont\equiv0.02$ (black) and $\delayFusCont\equiv0$ (red).}
	\label{fig:p-of-n-scalar}
\end{figure}

\revision{According to~\eqref{total-delay-hom}, the total delay $ \delayCont_{\textit{tot}} $ depends linearly on the sensor amount $ \sensNum $, being the fusion delay sensor-wise additive (cf.~\cref{rem:totalDelay}).
Therefore, if $ \psteady $ is seen as a function of the number of sensors $ \cardS\in \{1,\dots,\sensNum\} $ (having fixed $\delayCont$, $\delayCommCont$ and $\delayFusCont$), then $ \psteady(\cardS) $ has the same structure of $ \psteady(\delayCont) $ when communication and fusion delays are constant,
and can be minimized analogously (on a discrete domain).}\\
~\autoref{fig:p-of-n-scalar} shows the expected estimation error variance as a function of the sensor amount.
The red marks shows that in the absence of fusion delays the error decreases with the number of sensors.
However, in the realistic case with non-negligible fusion delays (black marks), using more sensors might hinder performance.

\section{Discrete-time Analysis}\label{sec:discrete-time}

This section addresses the general discrete-time, multidimensional formulation in~\cref{prob:time-invariant-P-opt}.
In discrete time, the delays $ \delayDisc{} $ are expressed in time steps with respect to the sampling period $ \Delta $.

~\cref{prob:time-invariant-P-opt} cannot be solved analytically, due to its combinatorial nature.
Also, in general the \revision{cost $ \tr{\Psteady(\delaySetDisc{}{})} $} cannot be computed in closed form, since it derives from the solution of a Riccati equation. 
To make things even more complicated, given a sensor subset, the structure of the cost function also depends \revision{on how} the delays are sorted.

To circumvent these issues, we propose a greedy selection algorithm. 
We do this in two steps. In this section we describe a procedure (based on~\cite{schenato2008}) to compute 
\revision{the cost function in~\eqref{prob-2} (and in particular the steady-state expected covariance $\Psteady(\delaySetDisc{}{})$) 
for a given set of sensors and given preprocessing delays}.
Then, the greedy algorithm that selects sensors and computes the optimal preprocessing is presented in~\autoref{sec:greedy-algorithms}.

\subsection{\revision{Steady-state Covariance Computation}}

\revision{This section shows how to compute the steady-state expected covariance for a given choice of the active sensors  $ \mathcal{S} \subseteq \sensSet $
and given preprocessing delays.
For notational convenience and without loss of generality, we label the active sensors as $\mathcal{S} = \{1, 2, \ldots, \cardS\}$
and denote the corresponding delays as $\delayDisc{1}, \delayDisc{2},\ldots, \delayDisc{\cardS}$. 
Finally, we sort the sensors are discussed below.
}
\begin{ass}[Sensor sorting]\label{ass:sensorSorting}
	The sensors in $ \mathcal{S} $ are labeled according to $ \delayDiscTot{i-1} \le \delayDiscTot{i}, \, i = 2,...,\cardS$ \revision{(cf.~\autoref{fig:sensorDelaysSubfigures})}.
\end{ass}

\revision{\cref{ass:sensorSorting} states that, if $ i < j $, the $ i $-th sensor has its data received at fusion station in shorter time than the $ j $-th sensor, and therefore it provides fresher data.}

\revision{We now provide a procedure which, given sensor delays and parameters, computes the steady-state expected covariance $\Psteady(\delaySetDisc{}{})$ 
(and hence the cost $ \tr{\Psteady(\delaySetDisc{}{})} $ in~\eqref{prob-2}), and is exploited by the algorithm in~\autoref{sec:comp-wise-descent} to assess the performance of sensor subsets.}
The following is \emph{not} a closed-form \revision{--but rather an iterative--} computation.
\revision{We outline the procedure and then provide an illustration of the procedure with $ \cardS = 3 $ active sensors using \autoref{fig:covarianceComputation}.}

We start by expanding the matrices that describe the measurement model~\eqref{eq:measurementModel} as
\begin{equation}
C = \left[ C_1^T
\dots
C_\cardS^T \right]^T \
R(\delaySetDisc{}{}) = \mbox{diag}(R_1(\delayDisc{1}),...,R_\cardS(\delayDisc{\cardS}))
\label{sensor-fusion-model}
\end{equation}
\revision{
	where we assume independent sensors with state-output matrix $ C_i $ and covariance $ R_i(\delayDisc{i}) = \nicefrac{b_i}{\delayDisc{i}}I_{m_i} $ (cf.~\eqref{R-model-cont-time-hom-net}).
}

\revision{Before introducing the key result in~\cref{thm:costFunction} below, we
introduce  a number of definitions associated with the Kalman filter with constant gains (cf.~\cite{schenatoKalmanFusion,sinopoliKalman}), which will be 
necessary for the statement of the theorem.}

\begin{definition}\label{def:defKF}
\revision{We define the following operations associated with the Kalman filter with constant gains and acting on the 
extended state estimate covariance matrix $P$:}
\begin{itemize}[leftmargin=*]
	
	\item \textit{Multi-step prediction} with $ \delayDisc{} > 0$ steps: 
	\begin{equation}\label{eq:KalmanOpenLoop}
	\timeUp^\delayDisc{}(P) \doteq \underbrace{\timeUp\circ...\circ\timeUp}_{\delayDisc{} \small\mbox{ times}}(P), \quad \timeUp(P) \doteq APA^T + Q
	\end{equation}
	
	\item \textit{Measurement update} with data acquired at \revision{time $k-\delayFusDiscTot -\delay $}:
	\begin{equation}\label{eq:measUpdate}
	\update\left( P,\delaySetDisc{\delay}{}\right) \doteq \left( P^{-1} + \tilde{\Gamma}(\delaySetDisc{\delay}{})\right) ^{-1}
	\end{equation}
	\revision{
	for any delay $ \delta $, where the information matrix of the processed data when the Kalman gains are constant is:\footnote{All updates use the Kalman filter in information form to handle the fusion more easily. This is also useful if sensor measurements have infinite variance at some locations. Having independent sensors yields a nice expression for $ \tilde{\Gamma} $, where each contribution is visible and disjoint from the others.} 
		\begin{equation}\label{eq:informationMatrixPktLoss}
			\tilde{\Gamma}(\delaySetDisc{\delay}{}) =
			\sum_{i \in \mathcal{S}(\delay)} \lambda_i\left[\Gamma_i - \Gamma_i\left(\dfrac{P^{-1}}{1-\lambda_i}+\Gamma_i\right)^{-1}\Gamma_i\right]
		\end{equation}
		In the previous expression, $ \Gamma_i = C_i^T(R_i(\delayDisc{i}))^{-1}C_i $ is the information matrix of the $ i $-th sensor 
		and $1-\lambda_i$ is its packet-loss probability
		(for details about the derivation of $ \tilde{\Gamma} $ with packet loss, see Appendix~\ref{app:pktLoss} in the supplementary material).
		We remark that the update is restricted to the sensors from which we have received the measurements by time $k - \delayFusDiscTot$.
		Formally, this set of processed sensors is:
		\begin{equation}\label{eq:delayedSensorTimeVar}
		\mathcal{S}(\delay)\doteq\left\lbrace i \in\mathcal{S} : \delayDiscTot{i} \le \delay \right\rbrace
		\end{equation}
		and their preprocessing delays are in $ \delaySetDisc{\delay}{} \doteq \{\delayDisc{i}\}_{i\in\mathcal{S}(\delay)}$.
	}
	
	\item \textit{One-step KF iteration} with data acquired at \revision{time $k-\delayFusDiscTot - \delay $}:
	\begin{equation}\label{eq:KalmanUpdate}
	\iteration\left( P,\delaySetDisc{\delay}{}\right) \doteq \timeUp\circ\update\left( P,\delaySetDisc{\delay}{}\right)
	\end{equation}
	
	\item \textit{Multi-step KF iteration} with data acquired in the time interval $ [k-\delayFusDiscTot-\delay_i + 1, \, k-\delayFusDiscTot -\delay_j] $ for any delays $\delay_i > \delay_j$:
	\begin{equation}\label{eq:KalmanMultiUpdate}
	\iteration^{\delay_i-\delay_j}\left( P,\delaySetDisc{\delay_i-1}{}\right) \doteq \iteration\left( ...\, \iteration\left( P,\delaySetDisc{\delay_i-1}{}\right), ..., \delaySetDisc{\delay_j}{}\right)
	\end{equation}
	\revision{
		where the one-step KF iterations may involve different subsets of active sensors, according to their delays.
	}

\end{itemize}
\end{definition}

Then we have the following result.

\begin{figure}
	\centering
	\includegraphics{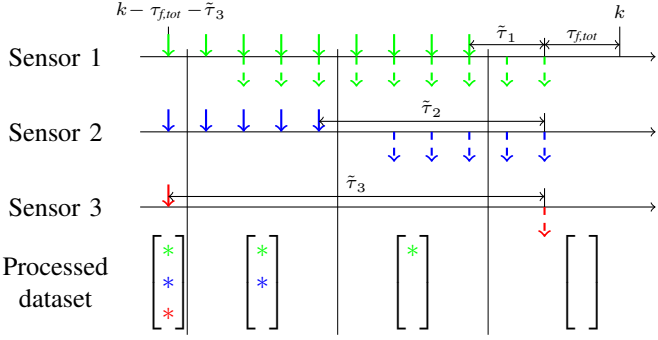}
	\caption{Estimation at time $ k $. Solid and dashed arrows show acquired and received (by central station) data, respectively. Colored stars represent sensor data which are available in the processed dataset.}
	\label{fig:covarianceComputation}
\end{figure}
	
\begin{thm}\label{thm:costFunction}
	\revision{Using the terminology and notation in~\cref{def:defKF},}
	the cost function in~\eqref{prob-2} is given by the trace of
	\begin{equation}
	\Psteady(\delaySetDisc{}{})= \timeUp^{\delayDisc{\textit{pred}}}\left(\iteration^{\delayDiscTot{\cardS}-\delayDiscTot{1}}\left(P_{\infty}\left(\delaySetDisc{}{}\right),\delaySetDisc{\delayDiscTot{\cardS}-1}{}\right)\right)
	\label{cost-function-open-loop}
	\end{equation}
	\revision{where:}
	\begin{itemize}[leftmargin=*]
		\item $ \delayDisc{\textit{pred}} \doteq \delayDiscTot{1}-1+\delayFusDiscTot $ is the length of the multi-step prediction;
		\item $ \delayDiscTot{\cardS} - \delayDiscTot{1} $ is the time between oldest and newest processed data;
		\item $ P_{\infty}(\delaySetDisc{}{}) $ solves the ARE where all active sensors are considered:
				\begin{equation}\label{eq:ARE}
				P_{\infty}(\delaySetDisc{}{}) = \iteration\left(P_{\infty}(\delaySetDisc{}{}),\delaySetDisc{\delayDiscTot{\cardS}}{}\right)
				\end{equation}
	\end{itemize}
\end{thm}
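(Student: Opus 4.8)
The plan is to interpret the steady-state estimate of $x_k$ as the output of a single constant-gain Kalman recursion run forward over ``information time'' from the distant past up to $k$, and then to read off the three phases of that recursion directly from the delay structure. First I would invoke the expected-covariance formalism for constant-gain filtering under packet loss (as in~\cite{schenato2008} and Appendix~\ref{app:pktLoss}): with fixed gains the covariance recursion is affine in $P$, so taking the expectation over the i.i.d.\ Bernoulli arrivals $\gamma_k^{(i)}$ yields a deterministic recursion whose measurement-update step is exactly $\update(\cdot,\cdot)$ with information matrix $\tilde{\Gamma}$ as in~\eqref{eq:informationMatrixPktLoss}, whose prediction step is $\timeUp$, and whose combined step is $\iteration=\timeUp\circ\update$. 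This reduces the claim to a purely deterministic bookkeeping statement about how many prediction and update steps separate the distant past from time $k$, and which sensors enter each update.

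Second I would set up the timeline. A measurement collected by sensor $i$ at information time $t$ is present in the processed dataset $\mathcal{Z}_k$ (cf.~\cref{def:datasets}) iff $t+\delayDiscTot{i}+\delayFusDiscTot\le k$; writing $\delay \doteq k-\delayFusDiscTot-t$, this is precisely $\delay\ge\delayDiscTot{i}$, i.e.\ $i\in\mathcal{S}(\delay)$ as defined in~\eqref{eq:delayedSensorTimeVar}. Hence the update applied at information time $t$ uses the preprocessing delays $\delaySetDisc{\delay}{}$, consistent with the definition of $\update(P,\delaySetDisc{\delay}{})$. Because of the sorting in~\cref{ass:sensorSorting}, as $\delay$ decreases from $\delayDiscTot{\cardS}$ toward $\delayDiscTot{1}$ the sets $\mathcal{S}(\delay)$ shrink monotonically, with sensor $\cardS$ dropping out first and sensor $1$ surviving until $\delay=\delayDiscTot{1}$.

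Third I would split the recursion into three regimes and count steps. For all information times $t\le k-\delayFusDiscTot-\delayDiscTot{\cardS}$ (i.e.\ $\delay\ge\delayDiscTot{\cardS}$) every sensor is available, so each step applies $\iteration(\cdot,\delaySetDisc{\delayDiscTot{\cardS}}{})$; assuming (as the footnote to~\cref{prob:time-invariant-P-opt} does) that the constant-gain filter with the full sensor set converges, its prior covariance stabilizes at the fixed point $P_{\infty}(\delaySetDisc{}{})$ of the ARE~\eqref{eq:ARE}. Starting from this $P_{\infty}$, the transient regime $\delayDiscTot{1}\le\delay\le\delayDiscTot{\cardS}-1$ consists of exactly $\delayDiscTot{\cardS}-\delayDiscTot{1}$ one-step iterations with decreasing sensor sets, i.e.\ $\iteration^{\delayDiscTot{\cardS}-\delayDiscTot{1}}(P_{\infty},\delaySetDisc{\delayDiscTot{\cardS}-1}{})$, whose innermost call uses $\delaySetDisc{\delayDiscTot{\cardS}-1}{}$ and whose outermost uses $\delaySetDisc{\delayDiscTot{1}}{}$, matching~\eqref{eq:KalmanMultiUpdate}. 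The last update occurs at information time $k-\delayFusDiscTot-\delayDiscTot{1}$, after which no data is available; pure prediction then carries the estimate from time $k-\delayFusDiscTot-\delayDiscTot{1}+1$ to $k$, a total of $\delayFusDiscTot+\delayDiscTot{1}-1=\delayDisc{\textit{pred}}$ steps, giving the outer $\timeUp^{\delayDisc{\textit{pred}}}$. Composing the three pieces yields~\eqref{cost-function-open-loop}.

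I expect the main obstacle to be the bookkeeping rather than any deep estimation-theoretic argument: one must align the delay indices of the nested iterations in~\eqref{eq:KalmanMultiUpdate} with the physical arrival times (including ties in~\cref{ass:sensorSorting}, where several sensors share a delay and enter the same update) and get the off-by-one in $\delayDisc{\textit{pred}}$ right, since $\iteration=\timeUp\circ\update$ already folds one prediction step into the last update. A secondary subtlety is justifying the distant-past reduction to the ARE solution: this relies on the constant-gain recursion being a contraction toward $P_{\infty}$ in the all-sensor regime, which is guaranteed by the assumed existence and stability of the steady-state constant-gain filter.
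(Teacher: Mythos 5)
Your proof is correct and follows essentially the same route as the paper's: identify the most recent timestamp $k-\delayFusDiscTot-\delayDiscTot{\cardS}$ at which all sensors contribute so that the expected covariance converges to the ARE fixed point $P_{\infty}(\delaySetDisc{}{})$, then apply the $\delayDiscTot{\cardS}-\delayDiscTot{1}$ one-step KF iterations with sensor sets $\mathcal{S}(\delay)$ shrinking per~\cref{ass:sensorSorting}, and close with the open-loop prediction of length $\delayDisc{\textit{pred}}=\delayFusDiscTot+\delayDiscTot{1}-1$, including the correct off-by-one from $\iteration=\timeUp\circ\update$. Your additional opening step (reducing the Bernoulli-arrival stochastic recursion to the deterministic constant-gain recursion with $\tilde{\Gamma}$) is material the paper delegates to~\cref{def:defKF}, Appendix~\ref{app:pktLoss}, and the references, so it is a welcome but not divergent elaboration.
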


\begin{proof}
	See Appendix~\ref{app:proofThmCostFunction} in the supplementary material.
\end{proof}

\cref{alg:computecovariance} implements~\eqref{cost-function-open-loop}: line~\ref{alg-computeVar-ARE} solves the ARE~\eqref{eq:ARE}; loop~\ref{alg-computeVar-meas-begin}--\ref{alg-computeVar-meas-end} computes the multi-step KF iterations, with $ j $-th iteration involving the subset $ \{1,..., \cardS-j\} $; line~\ref{alg-computeVar-pred} computes the multi-step prediction.
This procedure yields the \revision{desired steady-state expected covariance $ \Psteady(\delaySetDisc{}{}) $}.

\begin{algorithm}
	\caption{\texttt{computeCovariance} subroutine\label{alg:computecovariance}}
	\begin{algorithmic}[1]
		\Require System $ (A,Q) $, state-output matrix $C_i$, noise covariance $ R_i(\cdot) $, communication and fusion delays $ \delayCommDisc $, $ \delayFusDisc $ for each active sensor $ i \in\mathcal{S}$, preprocessing delays $ \delaySetDisc{}{} $.
		\Ensure \revision{Expected} error covariance $ \Psteady(\delaySetDisc{}{}) $.
		\State Compute solution $ P_{\infty}(\delaySetDisc{}{}) $ of ARE with all sensors\label{alg-computeVar-ARE};
		\State $ P \leftarrow P_{\infty}(\delaySetDisc{}{}) $;
		\For{sensor amount $ i\leftarrow \cardS-1 $ \textbf{down to} $ 1 $}\label{alg-computeVar-meas-begin}
		\State multi-step KF iteration: $ P \leftarrow \iteration^{\delayDiscTot{i+1}-\delayDiscTot{i}}\left( P,\delaySetDisc{\delayDiscTot{i}}{}\right)$;
		\EndFor\label{alg-computeVar-meas-end}
		\State multi-step prediction: $ \Psteady(\delaySetDisc{}{}) \leftarrow \timeUp^{\delayDisc{\textit{pred}}}(P) $\label{alg-computeVar-pred};
		\State \textbf{return} $ \Psteady(\delaySetDisc{}{})$.
	\end{algorithmic}
\end{algorithm}

\revision{
	\autoref{fig:covarianceComputation} illustrates the procedure with $ \cardS = 3 $ active sensors.
	At time $k - \delayFusDiscTot$, the fusion station initiates the computation to produce an estimate at time $k$.
	When performing the fusion, the station 
	has access to the data from all sensors collected  until time $ k-\delayFusDiscTot-\delayDiscTot{3} $.
	However, due to the computation and communication delays, it will only have access to a subset of the sensor data after $ k-\delayFusDiscTot-\delayDiscTot{3} $. In particular, sensor 3 has the largest preprocessing-and-communication delay $ \delayDiscTot{3} $, 
	and the data it collects after time $ k-\delayFusDiscTot-\delayDiscTot{3} $ will only be received at the fusion station after time $k - \delayFusDiscTot$.
	The insight of~\cref{alg:computecovariance} is simple: the algorithm computes the expected covariance 
	until time $ k-\delayFusDiscTot-\delayDiscTot{3} $ (when all sensors are available), and then it collects the sporadic measurements 
	collected after $ k-\delayFusDiscTot-\delayDiscTot{3} $ which arrived at the fusion station before time $k - \delayFusDiscTot$ (\ie the ones from sensor 1 and 2 in the figure). In particular, accounting for all measurements collected until time $ k-\delayFusDiscTot-\delayDiscTot{3} $
	leads to the steady-state expected error covariance $ P_\infty(\delaySetDisc{}{}) $ satisfying~\eqref{eq:ARE}, 
	while the subsequent measurements are captured by~\eqref{cost-function-open-loop}.
	In \autoref{fig:covarianceComputation}, sensor 3 only provides one measurement, and the following four estimates use sensors 1 and 2.
	Afterwards, also sensor 2 becomes outdated and the last measurement updates only involve sensor 1.
	After the processed dataset has been used, the current-state estimate is retrieved with an open-loop prediction	compensating for the remaining delay (in this case induced by sensor 1 and fusion). 
	In the figure, the sensor contribution to the state estimates over time is highlighted with the matrix in the bottom row: at first, all sensor data are available (full bottom-left matrix), then some sensors become outdated, until no more sensor measurement is received and 
	the state estimate must be propagated in open loop
	(empty bottom-right matrix).
}

	\begin{rem}[Cost computation with state augmentation]
	\cref{cost-function-open-loop} can be equivalently written in a more compact way, by considering the augmented system with $ \delayDiscTot{\cardS} + \delayFusDiscTot $ consecutive states and $ C $ and $ R $ having nonzero blocks according to processed data. The \revision{cost $ \tr{\Psteady\left(\delaySetDisc{}{}\right)} $} would be retrieved by computing the steady-state \revision{expected} covariance of the augmented-state-estimate error, and cropping the bottom-right block. However, this is numerically inefficient and does not exploit the specific structure of the problem.
\end{rem}

\subsection{\revision{Extensions and Example}}

\revision{In this section, we discuss extensions of~\cref{alg:computecovariance} to an MPC-like setup and to the case of a 
multi-rate network. We conclude the section with a numerical example.}

\begin{rem}[Adaptive selection]\label{rem:timeVaryingAlg}
	\revision{
		\cref{alg:computecovariance} can be modified for an adaptive design, \eg 
		to deal with the case where the system parameters change overtime, or where we desire to schedule different sensors over time. 
		The time-varying counterpart of~\cref{prob:time-invariant-P-opt} might be solved multiple times over suitable horizons, in an MPC-like fashion.
		In particular, line~\ref{alg-computeVar-ARE} can be substituted with online KF iterations~\eqref{eq:KalmanUpdate}
		from time $ k_0 $ to $ k-\delayFusDiscTot-\delayDiscTot{\cardS} $, where all sensors are considered.	
	}
\end{rem}

\begin{figure}
	\centering
	\includegraphics{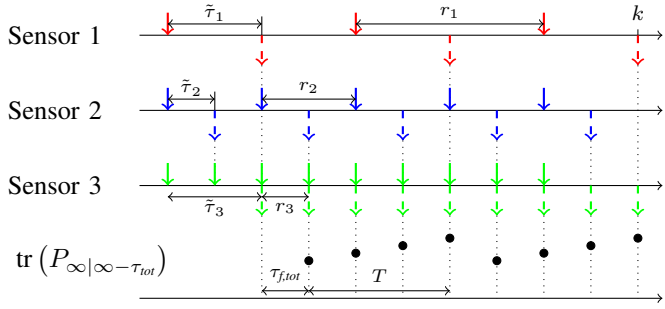}
	\caption{\revision{$ T $-periodic} cost with three multi-rate sensors. Solid arrows: sensor acquisitions; dashed arrows: data reception at central station.}
	\label{fig:periodicCovariance}
\end{figure}

\begin{rem}[Multi-rate networks]
	\cref{alg:computecovariance} can deal with networks where nodes have heterogeneous acquisition times $ r_i $ (see~\autoref{fig:sensorDelaysMultirate}).
	This fact is quite natural in processing networks: 
	for example, the drones in~\autoref{fig:vehicle-tracking} can use cameras with different frame rates.
	\revision{The corresponding information matrix (for the $ i $-th sensor) can be easily modeled in our setup as a 
	time-varying matrix:}
	\begin{equation}\label{eq:info-matrix-multi-rate} 
		\Gamma_i(\delayDisc{i},k) \doteq \begin{cases}
		C_i^T(R_i(\delayDisc{i}))^{-1}C_i, & \mbox{if } k_0=k \mod r_i\\
		0_{n\times n}, & \mbox{otherwise}
		\end{cases}
	\end{equation}
	\revision{which can be readily used in~\cref{alg:computecovariance}.}
	\autoref{fig:periodicCovariance} shows a qualitative behavior of the cost with three multi-rate sensors.
	
\end{rem}

\begin{figure}
	\centering
	\includegraphics[width=0.7\linewidth]{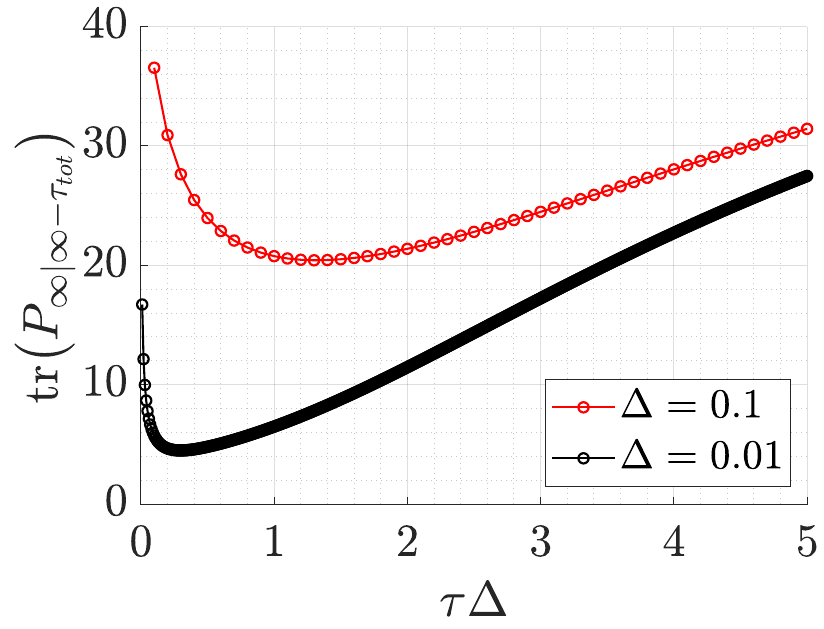}
	\caption{Cost function for a homogeneous network. The original continuous-time system has poles $\sigma(A)=\{-1,-0.1\}$ and $ Q=10I_2 $.}
	\label{fig:Psteady-disc-time-multidim-hom-net}
\end{figure}

\begin{ex}[Homogeneous network]\label{sec:hom-net-disc-time-scalar}
	Now that we are able to compute the cost function in the discrete-time case, 
	we test the simplified case of a homogeneous network where we only need to decide a single preprocessing delay for all sensors,
	\revision{without addressing sensor selection}.
	This allows establishing a connection with the formal setup in~\autoref{sec:hom-net-cont-time}.
	The numerical simulations in \autoref{fig:Psteady-disc-time-multidim-hom-net} exhibit a quasi-convex behavior, similarly to the continuous-time counterparts in~\autoref{fig:poftautwoparts}.
	This motivates us to explore greedy ``descend'' methods that attempt to iteratively minimize the cost function.\footnote{In the 
	homogeneous case, we only need to compute a single delay and the optimal preprocessing delay can be computed easily even by a brute-force search. However, our goal is to tackle the general case of heterogeneous networks where the problem becomes high-dimensional.}
\end{ex}

\section{A Greedy Algorithm for Selection}\label{sec:greedy-algorithms}

Since~\cref{prob:time-invariant-P-opt} cannot be solved analytically, we propose a two-step greedy approach
relying on the insights highlighted in~\cref{rem:insights}.
On the one hand, motivated by the need of choosing an optimal sensor subset,
the algorithm selects iteratively one sensor at a time, until the cost cannot be further decreased.
On the other hand, leveraging the intuition of a quasi-convex cost,
the delays of each tentative subset are optimized by a dedicated descent subroutine.
\autoref{sec:comp-wise-descent} shows how the latter optimizes the delays for a given sensor set,
while~\autoref{sec:sensor-selection} presents the full procedure.

\subsection{Sensor-wise Descent for Delay Selection}\label{sec:comp-wise-descent}

We propose a \emph{sensor-wise descent} algorithm to compute near-optimal computational delays $ \delaySetDisc{}{*} $ for a given active set $ \mathcal{S} $: 
we optimize one delay $ \delayDisc{i} $  at a time, by minimizing the associated one-dimensional problem, with all other delays being fixed.
\cref{alg:comp-wise-descent} shows the subroutine steps.
\revision{
	The to-be-returned delays and cost are initialized with the input delays $ \delaySetDisc{I}{} $ and the trace of the expected error covariance computed with $ \delaySetDisc{I}{} $, respectively (lines~\ref{cmp-wise-init-begin}--\ref{cmp-wise-init-end}).\footnote{The delays $ \delaySetDisc{I}{} $ are provided suitably by the algorithm in~\autoref{sec:sensor-selection}.}
	The outer loop between lines~\ref{cmp-wise-loop-sensors-begin}--\ref{cmp-wise-loop-sensors-end} optimizes the delay $ \delaySetDisc{}{*}[i] =\delayDisc{i} $ with a one-dimensional descent.
	For each delay, an explorative iteration is first used to set the sign of the unitary stepsize $ \alpha $, according to the descent direction (line~\ref{cmp-wise-set-descent-1}--\ref{cmp-wise-set-descent-2}).
	Given $\alpha$, the inner loop (lines~\ref{cmp-wise-min-cost-update}--\ref{cmp-wise-cost-update}) computes the near-optimal delay: 
	the descent direction is explored until a local minimum is found or surpassed (condition~\ref{cmp-wise-desc-cond}).
	The best achieved delay is restored and saved in line~\ref{cmp-wise-final-update}.
}
\revision{\autoref{fig:three-sensors-function} shows the cost function with {three sensors}: it looks quasi-convex, consistently with the homogeneous networks.
\autoref{fig:three-sensors-descent} illustrates an execution of the proposed algorithm and its sensor-wise descent nature.}

\begin{algorithm}
	\caption{\texttt{sensorWiseDescent} subroutine\label{alg:comp-wise-descent}}
	\begin{algorithmic}[1]
		\Require System $ (A,Q) $, state-output matrix $C_i$, noise covariance $ R_i(\cdot) $, communication and fusion delays $ \delayCommDisc $, $ \delayFusDisc $ for each active sensor $ i \in\mathcal{S}$, initial delays $ \delaySetDisc{I}{} $.
		\Ensure Near-optimal delay set $ \delaySetDisc{}{*}$, cost $ \tr{\Psteady(\delaySetDisc{}{*})} $.
		\State $ p_{\textit{min}}\leftarrow \tr{\texttt{computeCovariance}(A,Q,\mathcal{S},\delaySetDisc{I}{})} $\label{cmp-wise-init-begin};
		\State $ \delaySetDisc{}{*} \leftarrow \delaySetDisc{I}{} $\label{cmp-wise-init-end};
		\For{\textbf{each} $ i\in \mathcal{S} $}\label{cmp-wise-loop-sensors-begin}
		\State Stepsize $ \alpha \leftarrow -1 $ ;  \label{cmp-wise-set-descent-1}\algorithmiccomment{default: delay $ \delayDisc{i} $ is decreased}
		\State$ \delaySetDisc{}{*}[i] \leftarrow \delaySetDisc{}{*}[i]+\alpha$;\label{cmp-wise-set-descent-1/2}
		\State$ p_{\textit{curr}}\leftarrow \tr{\texttt{computeCovariance}(A,Q,\mathcal{S},\delaySetDisc{}{*})} $;
		\If{$ p_{\textit{min}} \le p_{\textit{curr}} $}
		\State $ \alpha \leftarrow +1 $; \algorithmiccomment{delay $ \delayDisc{i} $ is increased}
		\EndIf\label{cmp-wise-set-descent-2}
		\Do\label{cmp-wise-loop-delay-begin}
		\State$ p_{\textit{min}} \leftarrow p_{\textit{curr}} $\label{cmp-wise-min-cost-update};
		\State$ \delaySetDisc{}{*}[i] \leftarrow \delaySetDisc{}{*}[i]+\alpha$;
		\State$ p_{\textit{curr}}\leftarrow \tr{\texttt{computeCovariance}(A,Q,\mathcal{S},\delaySetDisc{}{*})} $\label{cmp-wise-cost-update};
		\doWhile{$ {p_{\textit{min}}}\le {p_{\textit{curr}}} $\label{cmp-wise-desc-cond}}
		\State$ \delaySetDisc{}{*}[i] \leftarrow \delaySetDisc{}{*}[i]-\alpha$;\label{cmp-wise-final-update}
		\EndFor\label{cmp-wise-loop-sensors-end}
		\State \textbf{return} $\delaySetDisc{}{*}, \ p_{\textit{min}}$.
	\end{algorithmic}
\end{algorithm}

\begin{figure}
	\centering
	\includegraphics[width=.7\linewidth]{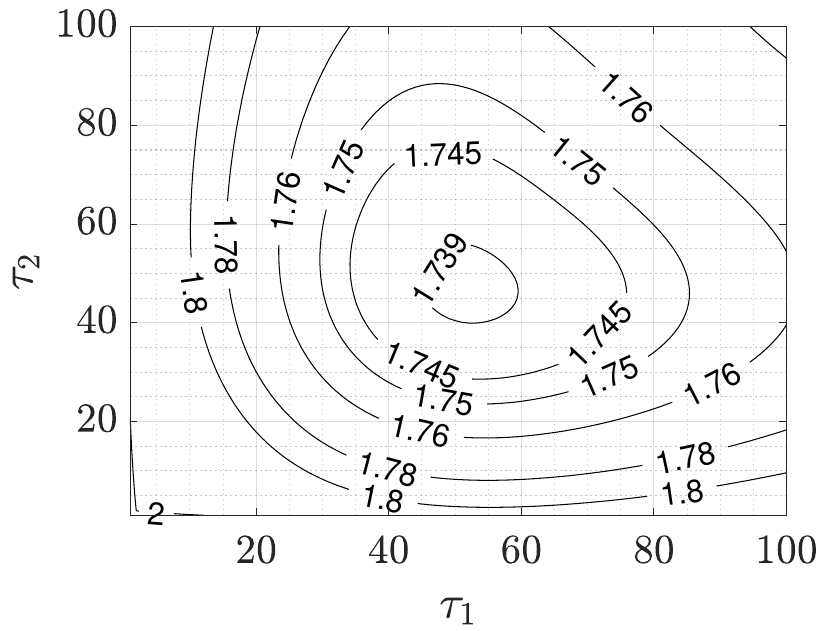}
	\caption{Cost-function levels with constant $ \delayCommDisc $, $ \delayFusDisc $ ($ \delayDisc{3} $ is fixed).}
	\label{fig:three-sensors-function}
\end{figure}
\begin{figure}
	\centering
	\includegraphics[width=.7\linewidth]{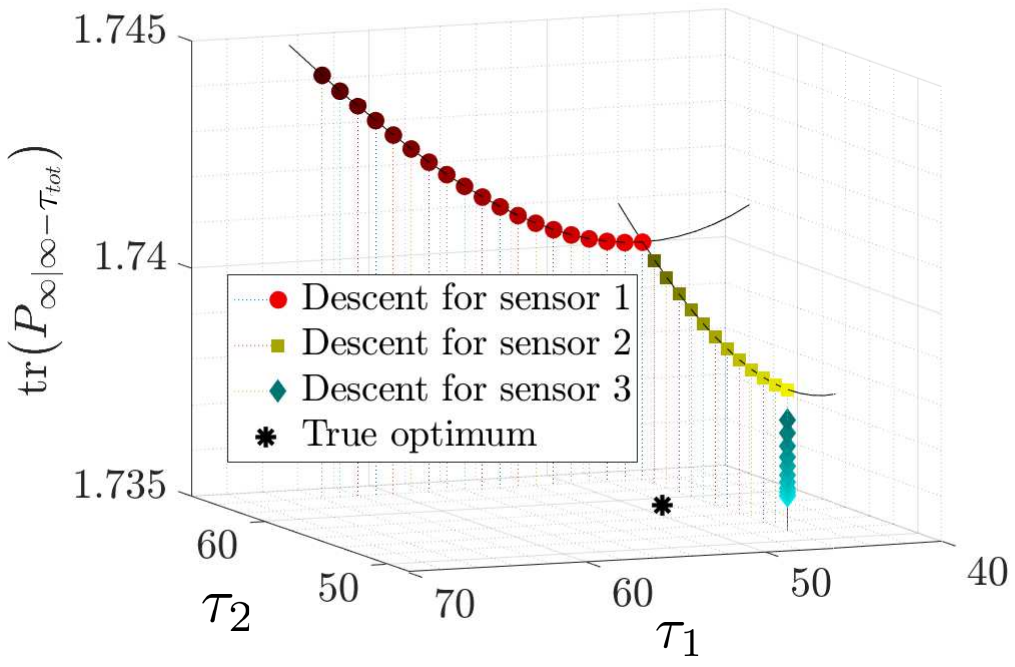}
	\caption{Visualization of \texttt{sensorWiseDescent} on cost function in~\autoref{fig:three-sensors-function} ($ \delayDisc{3} \mbox{ goes from } 64 \mbox{ to }52 $): iterations go from darker to lighter marks.}
	\label{fig:three-sensors-descent}
\end{figure}

\subsection{Sensor Selection}\label{sec:sensor-selection}

We now present the main procedure to solve~\cref{prob:time-invariant-P-opt}.
A greedy algorithm selects one sensor at a time, as long as the cost can be dropped.
For each tentative subset, preprocessing delays are optimized as described in~\autoref{sec:comp-wise-descent}.

\begin{algorithm}
	\caption{Sensor selection\label{alg:add}}
	\begin{algorithmic}[1]
		\Require System $ (A,Q) $, state-output matrix $C_i$, noise covariance $ R_i(\cdot) $, communication and fusion delays $ \delayCommDisc $, $ \delayFusDisc $ for each available sensor $ i \in\sensSet $.
		\Ensure Near-optimal sensor set $ \mathcal{S}^* $ and delay set $\delaySetDisc{}{*}$.
		
		\State Compute optimal delays $ \delayDisc{\textit{opt,i}}^\textit{o} $ for one-sensor subsets $ \{i\} $\label{sens-sel-single-sens-opt};	
		\State $\mathcal{S}^* \leftarrow $ one-sensor subset achieving minimum cost\label{sens-sel-initialization-begin};
		\State $\delaySetDisc{}{*} \leftarrow $ \{optimal delay $ \delayDisc{\textit{opt},\mathcal{S}^*}^\textit{o} $ for $ \mathcal{S}^* $\};
		\State $ p_{\textit{min}} \leftarrow $ minimum cost achieved by $ \mathcal{S}^* $\label{sens-sel-initialization-end};
		\Do\label{sens-sel-add-sens-loop-begin}
			\State toSelect $ \leftarrow \emptyset$;
			\For{\textbf{each} {\normalfont sensor toTry} $  \in \sensSet\backslash\mathcal{S}^* $}\label{sens-sel-best-sens-loop-begin}
				\State $\mathcal{S}_{\textit{curr}} \leftarrow \mathcal{S}^*\cup\{\mbox{toTry}\}$\label{sens-sel-temp-set};
				\State $\delaySetDisc{\textit{curr}}{} \leftarrow \delaySetDisc{}{*}\cup\left\lbrace \delayDisc{\textit{opt},\mbox{\scriptsize toTry}}^\textit{o} \right\rbrace$\label{sens-sel-delay-init};
				\State $[\delaySetDisc{\textit{curr}}{}, p_{\textit{curr}}]\leftarrow$ 
				\Statex \hfill $\texttt{sensorWiseDescent}\left(A,Q,\mathcal{S}_{\textit{curr}},\delaySetDisc{\textit{curr}}{}\right)$\label{sens-sel-cmp-wise-descent};
				\If{${p_{\textit{min}}}> {p_{\textit{curr}}}$}\label{sens-sel-temp-best-begin}
					\State toSelect $ \leftarrow $ toTry;\label{sens-sel-to-add}
					\State $p_{\textit{min}}\leftarrow p_{\textit{curr}}$;
					\State $\delaySetDisc{\textit{curr}}{*} \leftarrow \delaySetDisc{\textit{curr}}{}$;\label{sens-sel-current-best-delays}
				\EndIf
			\EndFor\label{sens-sel-best-sens-loop-end}
			\If{ $ \exists $ {\normalfont toSelect}}\label{sens-sel-sens-set-update-begin}
				\State $ \mathcal{S}^*\leftarrow\mathcal{S}^*\cup\{\mbox{toSelect}\} $;
				\State $\delaySetDisc{}{*} \leftarrow \delaySetDisc{\textit{curr}}{*} $;   
			\EndIf\label{sens-sel-sens-set-update-end}
		\doWhile{${p_{\textit{min}}}\le {p_{\textit{curr}}}$ \textbf{or} $\cardS= \lvert \sensSet \rvert $\label{sens-sel-add-sens-loop-end}}
		\State \textbf{return} $ \mathcal{S}^*, \ \delaySetDisc{}{*}$.
	\end{algorithmic}
\end{algorithm}

\cref{alg:add} shows the pseudocode of the proposed algorithm.
\revision{
	First, the optimal performance is computed for each available sensor, by taking one at a time
	(line \ref{sens-sel-single-sens-opt}).
	The to-be-returned sensor and delay subsets $\mathcal{S}^*$, $\delaySetDisc{}{*} $ and the minimum cost $ p_{\textit{min}} $ are initialized with the sensor achieving the minimum cost (lines~\ref{sens-sel-initialization-begin}--\ref{sens-sel-initialization-end}).
	The outer loop~\ref{sens-sel-add-sens-loop-begin}--\ref{sens-sel-add-sens-loop-end} adds one sensor at a time to the selection $ \mathcal{S}^* $, stopping when the cost hits a local minimum (no other sensor can be added to further reduce the cost) or all available sensors have been selected.
	The inner loop~\ref{sens-sel-best-sens-loop-begin}--\ref{sens-sel-best-sens-loop-end}, given the current selection $ \mathcal{S}^* $, builds the tentative subsets $ \mathcal{S}_{\textit{curr}} $ (line~\ref{sens-sel-temp-set}) by adding the so-far-excluded sensors (\emph{toTry}) one at a time.
	The near-optimal delays $ \delaySetDisc{\textit{curr}}{} $ for the tentative set are initialized with the best delays obtained so far for the sensors in $ \mathcal{S}_{\textit{curr}} $ (line~\ref{sens-sel-delay-init}), \ie with the current near-optimal delays $ \delaySetDisc{}{*} $ for the already-selected sensors, and with the single-sensor optimal delay for the tentative sensor: intuitively, a ``small'' difference between subsets yields ``small'' differences between optimal delays. The sensor-wise descent is in charge of computing the near-optimal delays and cost for each subset (line~\ref{sens-sel-cmp-wise-descent}).
	When a tentative subset hits a new minimum (line~\ref{sens-sel-temp-best-begin}), the sensor \emph{toTry} becomes \emph{toSelect} (line~\ref{sens-sel-to-add}), \ie it is the best candidate to be added to the selected subset.
	The temporary variable $ \delaySetDisc{\textit{curr}}{*} $ allows not to overwrite the delays used to initialize \texttt{sensorWiseDescent}.
	When all available sensors have been tried, the one \emph{toSelect} (if any) and the new near-optimal delays are stored in the to-be-returned variables (lines~\ref{sens-sel-sens-set-update-begin}--\ref{sens-sel-sens-set-update-end}).
}

\begin{rem}[Non-detectable subsystems]
	\revision{
		Some costs in line~\ref{sens-sel-single-sens-opt} may not be computable if pairs $ (A,C_i) $ are not detectable.
		If this only holds for some sensors, the others may be involved in the initialization.
		Otherwise, the latter may be replaced by the greedy selection of the minimum-cardinality, minimum-cost sensor subset providing detectability.
	}
\end{rem}

\begin{rem}[Finite channel capacity]
	\revision{
		The channel capacity can be handled by adding a termination condition in line~\ref{sens-sel-add-sens-loop-end},
		stopping the algorithm when the selected sensors ``fill" the channel.
		A threshold $ \bar{\cardS} < \lvert \sensSet \rvert $ may limit the maximum number of selected sensors $ \cardS \le \bar{\cardS} $ (in this case, the structure of the algorithm ensures that the best $ \bar{\cardS} $-sensor subset is selected),
		or it may be computed online by considering the bandwidth utilization of each selected sensor.
		It is worth noting that several work in the literature deal with channel capacity, usually from a scheduling perspective, \eg~\cite{2018arXiv180405618W,10.1145/3209582.3209589,5281763}.
		Merging these two approaches may be possible, but goes beyond the scope of this paper, which presents a standalone, computation-efficient strategy.
		Also, the algorithm might be run multiple times over a suitable time horizon if a scheduling-like
		design is needed (cf.~\cref{rem:timeVaryingAlg}).
	}
\end{rem}

\begin{rem}[User-driven selection]
	\revision{
		If the task needs specific sensor data (as images or infra-red),~\cref{alg:add} can be customized, \eg
		having the initialization of the selected subset (lines~\ref{sens-sel-initialization-begin}--\ref{sens-sel-initialization-end})
		forced to include the corresponding sensors.
	}
\end{rem}

\section{Numerical Simulations}\label{sec:simulations}

Inspired by~\autoref{fig:vehicle-tracking}, we simulate an heterogeneous sensor network in charge of tracking position and velocity of a ground vehicle.
The system state $ \boldsymbol{x} = [\mathit{x} \, \mathit{\dot{x}} \, \mathit{y} \, \mathit{\dot{y}}]^T $, $ \mathit{x}$ and $ \mathit{y} $ being spatial coordinates,
has dynamics given by~\eqref{eq:processModel} with
\begin{equation}\label{simulationSystem}
	A = \begin{bmatrix}
		1 & \Delta & 0 & 0 \\
		0 & 1 & 0 & 0 \\
		0 & 0 & 1 & \Delta \\
		0 & 0 & 0 & 1
	\end{bmatrix} 
	\
	Q = \begin{bmatrix}
		\sigma^2_\mathit{x} & \\
		& \sigma^2_\mathit{y}
	\end{bmatrix} \otimes 
	\begin{bmatrix}
		\nicefrac{\Delta^2}{4} & \nicefrac{\Delta^3}{2} \\
		\nicefrac{\Delta^3}{2} & \Delta^2
	\end{bmatrix} 
\end{equation}
where \revision{$ \sigma_\mathit{x}^2=\sigma_\mathit{y}^2=0.1 $ convey the inaccuracy given by approximating the actual vehicle motion with constant speed, and we set the sampling time $ \Delta = 1\si{\milli\second}$}.\\
The available set $ \sensSet $ is composed of six smart sensors:
\begin{itemize}[leftmargin=*]
	\item sensor 1 models a drone equipped with a powerful GPU-CPU processing hardware and a high-resolution camera working at 60fps, with a sparse matrix $ C_i \in\Real{4\times 100} $ with density coefficient 0.3 (command \texttt{sprand} in Matlab);
	\item sensors 2 and 3 model drones with low-resolution cameras working at 30fps, with sparse matrices $ C_i \in\Real{4\times 20} $;
	\item sensors 4, 5 and 6 model event cameras collecting events at 100Hz, which output noisy estimates of the position.
\end{itemize}

\begin{table}
	\centering
	\caption{Parameters used in simulation for each sensor (sensor ID in parenthesis).}
	\begin{tabular}{|l|c|c|c|c|}
		\hline 
		\textbf{Parameter} & \boldmath $b_i$ & \boldmath  $c_i $  & \boldmath $f_i$ & \boldmath $r_i$ \\
		\hline
		Powerful drone (1) & \si{\num{5E-3}} & 2500 & 1250 & \si{\num{15}} \\ 
		\hline
		Lightweight drone (2, 3) & \si{\num{7.5E-2}} & 110 & 55 & \si{\num{30}} \\ 
		\hline
		Event camera (4, 5, 6) & \si{\num{3.4}} & 2 & 1 & \si{\num{10}} \\ 
		\hline
	\end{tabular}
	\label{tab:simulationsParams}
\end{table}

\autoref{tab:simulationsParams} collects the parameters used in the simulation, assuming WiFi connection at 25 Mbps, needed to ensure real-time performance of the high-resolution camera.
\revision{
	Also, we assume a packet-loss probability $ 1-\lambda = 0.25 $ for all sensors.
	We based our choice of parameters on the real-world experiments described in~\cite{4629874,2019arXiv190408405G},
	and assume varying communication and fusion delays according to model~\eqref{comm-fus-del-var}.
	For instance, drone 2 is assumed equipped with modest computational capabilities: with a 30\si{\milli\second}-long preprocessing delay, this sensor can estimate the vehicle position with an error standard deviation of 5\si{\centi\meter}.
	On the other hand, its low-quality camera provides images which take little time to be compressed/delivered via wireless.
	For instance, transmitting raw images (\ie with minimum preprocessing $ \delayDisc{2} = 1\si{\milli\second} $) takes 110\si{\milli\second}.
	We also assumed modest resources at the fusion center, which takes half of the communication time to process data from each sensor.
}
The preprocessing delays range from 10\si{\milli\second} to 290\si{\milli\second}, only considering multiples of 20 to make the brute-force search feasible. 

\begin{figure}
		\includegraphics[width=0.9\linewidth]{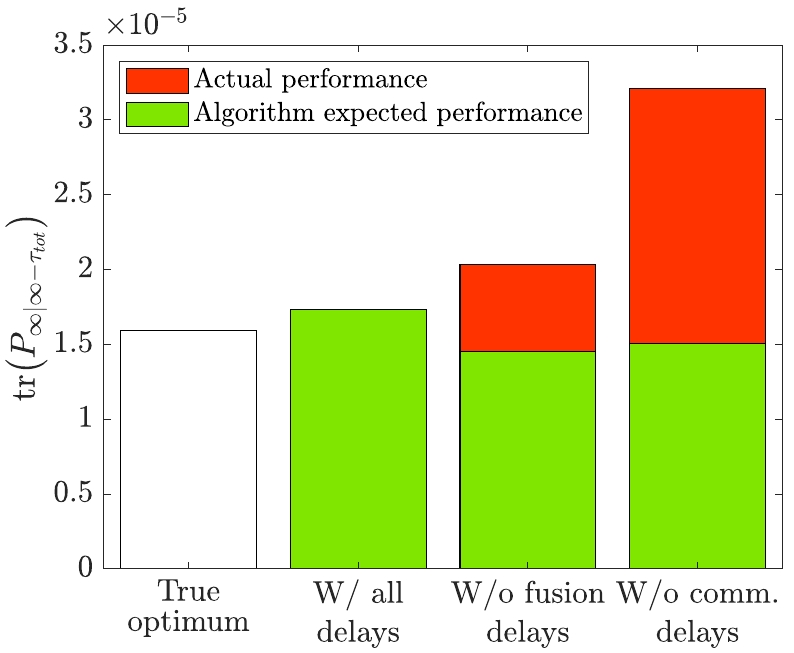}
		\caption{Optimal and selection algorithm costs with different models.
			Numerical values (actual performance): $ \{1.84, 1.94, 2.17, 3.32\}\!\times\!10^{-5} $.}
		\label{fig:algs-vs-true}
\end{figure}

\autoref{fig:algs-vs-true} shows the optimal cost (white bar) and the ones achieved by the algorithm considering either the full model or only a subset of the delays. 
\revision{In particular, red bars show the actual cost,
while green bars represent the performance predicted by the algorithm, which is an underestimate when the used model lacks either communication or fusion delays.}
The optimal cost is computed via a brute-force search and is only used for benchmarking, since this strategy does not scale in the size of the network.
\revision{The greedy selection makes an error of about 5.4\% when all delays are considered, of 18\% when neglecting fusion delays, and of 80\% when neglecting communication delays.}
This translates into a larger tracking error: for instance, using only sensors 1 raises to 4\si{\meter/\second} the optimal error on velocity, which is 3\si{\meter/\second}.
Also, selecting more sensors than necessary may impact other aspects, such as energy consumption.

\begin{table}
	\centering
	\caption{Sensors and delays: optimal and greedy selection.}
	{\small 
	\begin{tabular}{|l|l|c|c|c|c|c|c|}
		\hline
		\multicolumn{2}{|l|}{\textbf{Sensor}} & \textbf{1} & \textbf{2} & \textbf{3} & \textbf{4} & \textbf{5} & \textbf{6} \\
		\hline
		\multicolumn{2}{|l|}{True optimum} & $ \diagup $ & 30\si{\milli\second} & 30\si{\milli\second} & $ \diagup $ & $ \diagup $ & $ \diagup $ \\
		\hline
		\multirow{3}{*}{Greedy} & All delays & $ \diagup $ & 50\si{\milli\second} & 50\si{\milli\second} & $ \diagup $ & $ \diagup $ & $ \diagup $ \\
		\cline{2-8}
		& W/o fusion & 50\si{\milli\second} & 30\si{\milli\second} & 30\si{\milli\second} & $ \diagup $ & $ \diagup $ & $ \diagup $ \\
		\cline{2-8}
		& W/o comm. & 30\si{\milli\second} &  $ \diagup $ &  $ \diagup $ &  $ \diagup $ &  $ \diagup $ &  $ \diagup $ \\
		\hline
	\end{tabular}}
	\label{tab:delays}
\end{table}

Considering all delays, the proposed algorithm selects both the optimal sensor subset and near-optimal preprocessing delays (\autoref{tab:delays}).
\revision{Notice that, according to the intuition, the optimal choice features the same delay for both the selected sensor (sensors 2 and 3), being these (almost) identical.}
The powerful drone \revision{(sensor 1)} is discarded because of its heavy impact on communication and fusion latency: this also explains the performance drop when communication delays are not considered, as this sensor --when neglecting communication delays-- 
erroneously appears as the best sensor to choose.
The event cameras are excluded because of their large preprocessing noise, not balanced by their fast acquisition rate and small communication and fusion delays.

\begin{figure}
	\centering
	\includegraphics[width=0.9\linewidth]{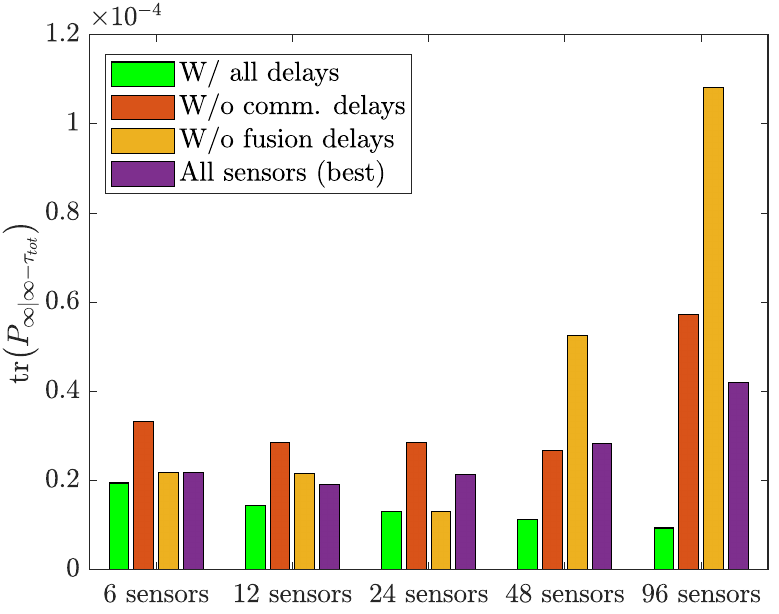}
	\caption{Greedy-selection and all-sensor cost with increasing set size.}
	\label{fig:alg-multi-sensors}
\end{figure}

\begin{table}
	\centering	
	\caption{Number of available sensors $ \lvert \sensSet \rvert $, and number of sensors $ \lvert \mathcal{S} \rvert$ selected by the proposed greedy algorithm.}
	{\small 
	\begin{tabular}{|l|c|c|c|c|c|}  
		\hline
		& \multicolumn{5}{c|}{\textbf{Sensor amount}} \\
		\hline
		Available & 6 & 12 & 24 & 48 & 96 \\
		\hline
		Greedy w/ all delays & 2 & 2 & 3 & 5 & 5 \\
		\hline
		Greedy w/o fusion delays & 3 & 6 & 3 & 48 & 96 \\
		\hline
	\end{tabular}}
	\label{tab:sensor-amount}
\end{table}

\revision{If we consider a larger number of available sensors, accounting for the fusion delays becomes even more important.
\autoref{fig:alg-multi-sensors} shows the greedy performance with increasing set size, together with the minimum cost obtained with all sensors (over 20 iterations of \texttt{sensorWiseDescent} with random initial delays).
For each increment in set size, the newly added sensors have different parameters either for communication delay (coefficient $ c_i $) or for preprocessing and fusion delays (coefficients $ b_i $ and $ f_i $), which range from 0.9 to 0.1 of the original ones in~\autoref{tab:simulationsParams}. One may think about these variations as different choices for the sensor hardware or better channel state/device position.
We see the impact of better-performing sensors in the sets with 12 and 24 sensors, where the costs obtained when neglecting some delays are in the range of the ones  in~\autoref{fig:algs-vs-true}. However, from 6 to 96 sensors, the gaps between the proposed approach (green bars) and alternatives that do not account for communication and fusion delays steadily increases, which is particularly evident with 48 and 96 sensors.
Also, given the availability of new better-performing sensors when increasing the sensor set, the near-optimal costs decrease.
\autoref{tab:sensor-amount} reports the number of sensors selected by~\cref{alg:add}: 
we can see that neglecting fusion delays leads in general to the choice of an unnecessarily large set of sensors, while the proposed approach ensures enhanced performance (as highlighted by the previous figures) while using a smaller set of sensors.}

\section{Conclusions and Future Work}\label{sec:conclusions}

In this paper, we investigate optimal estimation in a processing network in the presence of communication and computational delays.
Anytime sensor preprocessing is modeled with noise intensity decreasing with the amount of computation.
For homogeneous networks  monitoring a continuous-time scalar system we prove that the preprocessing delay can be optimized analytically, and observe that performing no preprocessing is typically a suboptimal policy.
We then consider the general case of heterogeneous networks and discuss the joint problem of computing the 
optimal amount of preprocessing at the sensors and selecting the most informative sensors. 
We develop a greedy algorithm for near-optimal delay-and-sensor selection. 
Numerical simulations show the effectiveness of such algorithm, and confirm that the proposed model 
leads to more accurate estimates.

There are several avenues for future work.
First, it is desirable to obtain suboptimality bounds on the proposed algorithm.
Second, the model can be made more realistic by
introducing non-ideal communication (unreliability, random delays), nonlinear dynamics, or parameter uncertainty.
Finally, it would be interesting to consider a fully distributed setup, where the estimation process is solved by local 
exchange of information, without a central fusion station. 

\ifCLASSOPTIONcompsoc
\section*{Acknowledgments}
\else
\section*{Acknowledgment}
\fi

This work was partially funded by the ONR RAIDER program (N00014-18-1-2828), the CARIPARO Foundation Visiting Programme ``HiPeR", and the Italian Ministry of Education PRIN n. 2017NS9FEY and under the initiative ``Departments of Excellence" (Law 232/2016).


\begin{thebibliography}{10}

\bibitem{NNRuntime}
M.~{Amir} and T.~{Givargis}.
\newblock Priority neuron: A resource-aware neural network for cyber-physical
  systems.
\newblock {\em IEEE Trans. on Computer-Aided Design of Integrated Circuits and
  Systems}, 37(11):2732--2742, Nov 2018.

\bibitem{ballotta19ifac}
{L.} Ballotta, {L.} Schenato, and {L.} Carlone.
\newblock From sensor to processing networks: Optimal estimation with
  computation and communication latency.
\newblock In {\em IFAC 2020 World Congress}, 2020.
\newblock arXiv preprint:2003.08301,
  \linkToPdf{https://arxiv.org/pdf/2003.08301.pdf}.

\bibitem{Bertsekas05book}
D.P. Bertsekas.
\newblock {\em {Dynamic Programming and Optimal Control}, Vol.~{I}}.
\newblock Athena Scientific, 2005.

\bibitem{Bisdikian:2013:QVI:2489253.2489265}
Chatschik Bisdikian, Lance~M. Kaplan, and Mani~B. Srivastava.
\newblock On the quality and value of information in sensor networks.
\newblock {\em ACM Trans. Sen. Netw.}, 9(4):48:1--48:26, July 2013.

\bibitem{Borkar97cccsp-limitedCommControl}
V.S. Borkar and S.K. Mitter.
\newblock {LQG} control with communication constraints.
\newblock {\em Comm., Comp., Control, and Signal Processing}, pages 365--373,
  1997.

\bibitem{Carlone18tro-attentionVIN}
L.~Carlone and S.Karaman.
\newblock Attention and anticipation in fast visual-inertial navigation.
\newblock {\em {IEEE} Trans. Robotics}, 2018.
\newblock arxiv preprint: 1610.03344,
  \linkToPdf{https://www.dropbox.com/s/c19kyrikroypahw/2017j-visualAttention.pdf?dl=0}.

\bibitem{2019arXiv190205703C}
S.~{Chinchali}, A.~{Sharma}, J.~{Harrison}, A.~{Elhafsi}, D.~{Kang},
  E.~{Pergament}, E.~{Cidon}, S.~{Katti}, and M.~{Pavone}.
\newblock Network offloading policies for cloud robotics: a learning-based
  approach.
\newblock {\em arXiv e-prints}, page arXiv:1902.05703, Feb 2019.

\bibitem{6669629}
A.~{Chiuso}, N.~{Laurenti}, L.~{Schenato}, and A.~{Zanella}.
\newblock Lqg cheap control subject to packet loss and snr limitations.
\newblock In {\em 2013 European Control Conf. (ECC)}, pages 2374--2379, July
  2013.

\bibitem{Elia01tac-limitedInfoControl}
N.~Elia and S.K. Mitter.
\newblock Stabilization of linear systems with limited information.
\newblock {\em IEEE Trans. on Automatic Control}, 46(9):1384--1400, 2001.

\bibitem{2019arXiv190408405G}
G.~{Gallego}, T.~{Delbruck}, G.~{Orchard}, C.~{Bartolozzi}, B.~{Taba},
  A.~{Censi}, S.~{Leutenegger}, A.~{Davison}, J.~{Conradt}, K.~{Daniilidis},
  and D.~{Scaramuzza}.
\newblock {Event-based Vision: A Survey}.
\newblock {\em arXiv e-prints}, page arXiv:1904.08405, April 2019.

\bibitem{Gupta06automatica}
V.~Gupta, T.~Chung, B.~Hassibi, and R.~Murray.
\newblock On a stochastic sensor selection algorithm with applications in
  sensor scheduling and sensor coverage.
\newblock {\em Automatica}, 42(2):251--260, 2006.

\bibitem{Hartley00}
R.~Hartley and A.~Zisserman.
\newblock {\em Multiple View Geometry in Computer Vision}.
\newblock Cambridge University Press, 2000.

\bibitem{Hartley04}
R.~I. Hartley and A.~Zisserman.
\newblock {\em Multiple View Geometry in Computer Vision}.
\newblock Cambridge University Press, second edition, 2004.

\bibitem{7800393}
K.~{Imagane}, K.~{Kanai}, J.~{Katto}, and T.~{Tsuda}.
\newblock Evaluation and analysis of system latency of edge computing for
  multimedia data processing.
\newblock In {\em 2016 IEEE 5th Global Conf. on Consumer Electronics}, pages
  1--2, Oct 2016.

\bibitem{Jawaid15automatica-scheduling}
S.T. Jawaid and S.L. Smith.
\newblock Submodularity and greedy algorithms in sensor scheduling for linear
  dynamical systems.
\newblock {\em Automatica}, 61:282--288, 2015.

\bibitem{Joshi09tsp-sensorSelection}
S.~Joshi and S.~Boyd.
\newblock Sensor selection via convex optimization.
\newblock {\em {IEEE} Trans. Signal Processing}, 57:451--462, 2009.

\bibitem{Karaman11ijrr-planning}
S.~Karaman and E.~Frazzoli.
\newblock Sampling-based algorithms for optimal motion planning.
\newblock {\em Intl. J. of Robotics Research}, 30(7):846--894, 2011.

\bibitem{8006543}
A.~{Kosta}, N.~{Pappas}, A.~{Ephremides}, and V.~{Angelakis}.
\newblock Age and value of information: Non-linear age case.
\newblock In {\em 2017 IEEE Intl. Symp. on Inf. Theory (ISIT)}, pages 326--330,
  June 2017.

\bibitem{anytimeKywe}
W.~W. {Kywe}, D.~{Fujiwara}, and K.~{Murakami}.
\newblock Scheduling of image processing using anytime algorithm for real-time
  system.
\newblock In {\em 18th International Conf. on Pattern Recognition (ICPR'06)},
  volume~3, pages 1095--1098, Aug 2006.

\bibitem{Skelton}
F.~Li, M.~C.~de Oliveira, and R.~Skelton.
\newblock Integrating information architecture and control or estimation
  design.
\newblock {\em SICE Journal of Control, Measurement, and System Integration},
  1:120--128, 01 2011.

\bibitem{4937860}
{Li Fan}, P.~{Dasgupta}, and {Ke Cheng}.
\newblock Swarming-based mobile target following using limited-capability
  mobile mini-robots.
\newblock In {\em 2009 IEEE Swarm Intelligence Symposium}, pages 168--175,
  March 2009.

\bibitem{8405565}
B.~{Maag}, Z.~{Zhou}, and L.~{Thiele}.
\newblock A survey on sensor calibration in air pollution monitoring
  deployments.
\newblock {\em IEEE Internet of Things Journal}, 5(6):4857--4870, Dec 2018.

\bibitem{4629874}
H.~{Medeiros}, J.~{Park}, and A.~{Kak}.
\newblock Distributed object tracking using a cluster-based kalman filter in
  wireless camera networks.
\newblock {\em IEEE Journal of Selected Topics in Signal Processing},
  2(4):448--463, Aug 2008.

\bibitem{Nair04sicon-rateConstrainedControl}
G.N. Nair and R.J. Evans.
\newblock Stabilizability of stochastic linear systems with finite feedback
  data rates.
\newblock {\em SIAM Journal on Control and Optimization}, 43(2):413--436, 2004.

\bibitem{Nozari17acc-scheduling}
E.~Nozari, F.~Pasqualetti, and J.~Cort{\'e}s.
\newblock Time-invariant versus time-varying actuator scheduling in complex
  networks.
\newblock In {\em American Control Conf.}, pages 4995--5000, Seattle, WA, May
  2017.

\bibitem{TX2website}
NVIDIA.
\newblock Nvidia jetson tx2 module specifications.
\newblock \url{https://developer.nvidia.com/embedded/buy/jetson-tx2}, 2018.

\bibitem{Leny11tac-scheduling}
J.~Le Ny, E.~Feron, and M.A. Dahleh.
\newblock {Scheduling Continuous-Time Kalman Filters}.
\newblock {\em IEEE Trans. on Automatic Control}, 56(6):1381--1394, 2011.

\bibitem{LeNy14tac-limitedCommControl}
J.~Le Ny and G.J. Pappas.
\newblock Differentially private filtering.
\newblock {\em IEEE Trans. on Automatic Control}, 59(2):341--354, 2014.

\bibitem{Pasqualetti11cdc-safeCPS}
F.~{Pasqualetti}, F.~{Dörfler}, and F.~{Bullo}.
\newblock Cyber-physical attacks in power networks: Models, fundamental
  limitations and monitor design.
\newblock In {\em IEEE Conf. on Decision and Control and European Control
  Conf.}, pages 2195--2201, 2011.

\bibitem{5281763}
S.~{Patil} and G.~D. {Veciana}.
\newblock Measurement-based opportunistic scheduling for heterogenous wireless
  systems.
\newblock {\em IEEE Transactions on Communications}, 57(9):2745--2753, 2009.

\bibitem{2018arXiv180402767R}
Joseph {Redmon} and Ali {Farhadi}.
\newblock {YOLOv3: An Incremental Improvement}.
\newblock {\em arXiv e-prints}, page arXiv:1804.02767, Apr 2018.

\bibitem{imageCompressionML}
Oren Rippel and Lubomir Bourdev.
\newblock Real-time adaptive image compression.
\newblock In {\em Proc. of the 34th International Conf. on Machine Learning -
  Volume 70}, ICML’17, page 2922–2930. JMLR.org, 2017.

\bibitem{10.1007/978-3-642-37213-1_16}
G{\"u}nter Rudolph.
\newblock Convergence rates of evolutionary algorithms for quadratic convex
  functions with rank-deficient hessian.
\newblock In Marco Tomassini, Alberto Antonioni, Fabio Daolio, and Pierre
  Buesser, editors, {\em Adaptive and Natural Computing Algorithms}, pages
  151--160, Berlin, Heidelberg, 2013. Springer Berlin Heidelberg.

\bibitem{schenatoKalmanFusion}
L.~{Schenato}.
\newblock Optimal sensor fusion for distributed sensors subject to random delay
  and packet loss.
\newblock In {\em 2007 46th IEEE Conference on Decision and Control}, pages
  1547--1552, 2007.

\bibitem{schenato2008}
L.~Schenato.
\newblock Optimal estimation in networked control systems subject to random
  delay and packet drop.
\newblock {\em IEEE Trans. on Automatic Control}, 53:1311--1317, 2008.

\bibitem{4118476}
L.~{Schenato}, B.~{Sinopoli}, M.~{Franceschetti}, K.~{Poolla}, and S.~S.
  {Sastry}.
\newblock Foundations of control and estimation over lossy networks.
\newblock {\em Proc. of the IEEE}, 95(1):163--187, Jan 2007.

\bibitem{Shafieepoorfard13cdc-attentionLQG}
E.~Shafieepoorfard and M.~Raginsky.
\newblock Rational inattention in scalar {LQG} control.
\newblock In {\em IEEE Conf. on Decision and Control (CDC)}, pages 5733--5739,
  2013.

\bibitem{Shalev-Shwartz17arxiv-safeDriving}
S.~Shalev-Shwartz, S.~Shammah, and A.~Shashua.
\newblock On a formal model of safe and scalable self-driving cars.
\newblock {\em ArXiv}, abs/1708.06374, 2017.

\bibitem{sinopoliKalman}
B.~{Sinopoli}, L.~{Schenato}, M.~{Franceschetti}, K.~{Poolla}, M.~I. {Jordan},
  and S.~S. {Sastry}.
\newblock Kalman filtering with intermittent observations.
\newblock {\em IEEE Transactions on Automatic Control}, 49(9):1453--1464, 2004.

\bibitem{Suleiman18jssc-navion}
A.~{Suleiman}, Z.~{Zhang}, L.~{Carlone}, S.~{Karaman}, and V.~{Sze}.
\newblock Navion: A 2-mw fully integrated real-time visual-inertial odometry
  accelerator for autonomous navigation of nano drones.
\newblock {\em IEEE Journal of Solid-State Circuits}, 54(4):1106--1119, 2019.

\bibitem{Summers17arxiv}
T.~Summers and M.~Kamgarpour.
\newblock Performance guarantees for greedy maximization of non-submodular set
  functions in systems and control.
\newblock {\em arXiv e-prints:1712.04122}, 2017.

\bibitem{Summers17arxiv2}
T.~Summers and J.~Ruths.
\newblock Performance bounds for optimal feedback control in networks.
\newblock {\em arXiv e-prints:1707.04528}, 2017.

\bibitem{Summers16tcns-sensorScheduling}
T.H. Summers, F.L. Cortesi, and J.~Lygeros.
\newblock On submodularity and controllability in complex dynamical networks.
\newblock {\em IEEE Trans. on Control of Network Systems}, 3(1):91--101, 2016.

\bibitem{8757960}
T.~{Taami}, S.~{Krug}, and M.~{O’Nils}.
\newblock Experimental characterization of latency in distributed iot systems
  with cloud fog offloading.
\newblock In {\em 2019 15th IEEE Intl. Workshop on Factory Comm. Systems
  (WFCS)}, pages 1--4, May 2019.

\bibitem{10.1145/3209582.3209589}
R.~{Talak}, S.~{Karaman}, and E.~{Modiano}.
\newblock Optimizing information freshness in wireless networks under general
  interference constraints.
\newblock In {\em Proc. of the Eighteenth ACM International Symposium on Mobile
  Ad Hoc Networking and Computing}, Mobihoc ’18, page 61–70, New York, NY,
  USA, 2018. Association for Computing Machinery.

\bibitem{Tatikonda04tac-limitedCommControl}
S.~Tatikonda and S.K. Mitter.
\newblock Control under communication constraints.
\newblock {\em IEEE Trans. on Automatic Control}, 49(7):1056--1068, 2004.

\bibitem{Tsiatsis2005}
Vlasios Tsiatsis, Ram Kumar, and Mani~B. Srivastava.
\newblock Computation hierarchy for in-network processing.
\newblock {\em Mobile Networks and Applications}, 10(4):505--518, Aug 2005.

\bibitem{Tzoumas18acc-sLQG}
V.~Tzoumas, L.~Carlone, G.J. Pappas, and A.~Jadbabaie.
\newblock Sensing-constrained {LQG} control.
\newblock In {\em American Control Conference}, 2018.
\newblock arxiv preprint: 1709.08826,
  \linkToPdf{https://arxiv.org/pdf/1709.08826.pdf}.

\bibitem{Tzoumas16acc-sensorScheduling}
V.~Tzoumas, A.~Jadbabaie, and G.J. Pappas.
\newblock Sensor placement for optimal kalman filtering: Fundamental limits,
  submodularity, and algorithms.
\newblock In {\em American Control Conf.}, 2016.

\bibitem{2018arXiv180208376T}
Vasileios {Tzoumas}, Luca {Carlone}, George~J. {Pappas}, and Ali {Jadbabaie}.
\newblock {LQG Control and Sensing Co-design}.
\newblock {\em arXiv e-prints}, page arXiv:1802.08376, Feb 2018.

\bibitem{2018arXiv180405618W}
Shuang {Wu}, Kemi {Ding}, Peng {Cheng}, and Ling {Shi}.
\newblock {Optimal Scheduling of Multiple Sensors over Lossy and Bandwidth
  Limited Channels}.
\newblock {\em arXiv e-prints}, page arXiv:1804.05618, Apr 2018.

\bibitem{8469047}
R.~D. {Yates} and S.~K. {Kaul}.
\newblock The age of information: Real-time status updating by multiple
  sources.
\newblock {\em IEEE Trans. on Information Theory}, 65(3):1807--1827, March
  2019.

\bibitem{Zhao16cdc-scheduling}
Y.~Zhao, F.~Pasqualetti, and J.~Cort{\'e}s.
\newblock Scheduling of control nodes for improved network controllability.
\newblock In {\em IEEE Conf. on Decision and Control (CDC)}, pages 1859--1864,
  Las Vegas, December 2016.

\bibitem{8778671}
B.~{Zhou} and W.~{Saad}.
\newblock Joint status sampling and updating for minimizing age of information
  in the internet of things.
\newblock {\em IEEE Trans. on Communications}, pages 1--1, 2019.

\bibitem{Zilberstein96ai-anytimeAlgorithms}
S.~Zilberstein.
\newblock Using anytime algorithms in intelligent systems.
\newblock {\em AI Magazine}, 17(3), 1996.

\end{thebibliography}

\begin{IEEEbiography}[{\includegraphics[width=1in,height=1.25in,clip,keepaspectratio]{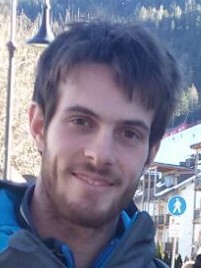}}]
	{Luca Ballotta} received his Bachelor's Degree in Information Engineering in 2017 and his Master's Degree in Automation Engineering in 2019 from the Universityof Padova, where he is currently pursuing the Ph.D. degree in Information Engineering, curriculum of Information science and technology. His research interests include multi-agent systems and distributed optimization for processing networks.
\end{IEEEbiography}
\vskip -2\baselineskip plus -1fil
\begin{IEEEbiography}[{\includegraphics[width=1in,height=1.25in,clip,keepaspectratio]{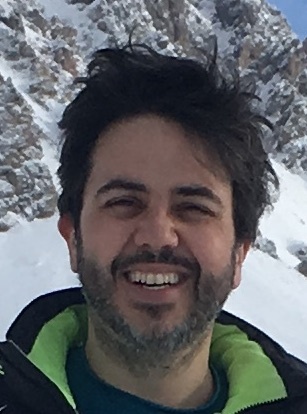}}]
	{Luca Schenato} received the Dr. Eng. degree in electrical engineering from the University of Padova in 1999 and the Ph.D. degree in Electrical Engineering and Computer Sciences from the UC Berkeley, in 2003. He held a post-doctoral position in 2004 and a visiting professor position in 2013-2014 at U.C. Berkeley. Currently he is Associate Professor at the Information Engineering Department at the University of Padova. His interests include networked control systems, multi-agent systems, wireless sensor networks, smart grids and cooperative robotics. Luca Schenato has been awarded the 2004 Researchers Mobility Fellowship by the Italian Ministry of Education, University and Research (MIUR), the 2006 Eli Jury Award in U.C. Berkeley and the EUCA European Control Award in 2014, and IEEE Fellow in 2017. He served as Associate Editor for IEEE Trans. on Automatic Control from 2010 to 2014 and he is he is currently Senior Editor for IEEE Trans. on Control of Network Systems and Associate Editor for Automatica.
\end{IEEEbiography}
\vskip -2\baselineskip plus -1fil
\begin{IEEEbiography}[{\includegraphics[width=1in,height=1.25in,clip,keepaspectratio]{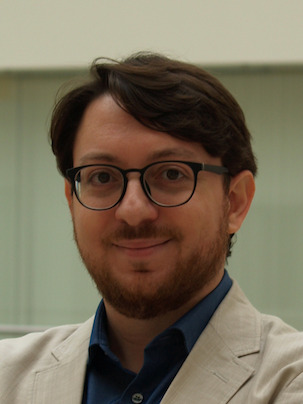}}]
	{Luca Carlone} is the \emph{Charles Stark Draper} Assistant Professor in the Department of Aeronautics and Astronautics at the Massachusetts Institute of Technology, and a Principal Investigator in the Laboratory for Information \& Decision Systems (LIDS). He has obtained a B.S. degree in mechatronics from the Polytechnic University of Turin, Italy, in 2006; an S.M. degree in mechatronics from the Polytechnic University of Turin, Italy, in 2008; an S.M. degree in automation engineering from the Polytechnic University of Milan, Italy, in 2008; and a Ph.D. degree in robotics also the Polytechnic University of Turin in 2012. He joined LIDS as a postdoctoral associate (2015) and later as a Research Scientist (2016), after spending two years as a postdoctoral fellow at the Georgia Institute of Technology (2013-2015). His research interests include nonlinear estimation, numerical and distributed optimization, and probabilistic inference, applied to sensing, perception, and decision-making in single and multi-robot systems. His work includes seminal results on certifiably correct algorithms for localization and mapping, as well as approaches for visual-inertial navigation and distributed mapping. He is a recipient of the 2017 Transactions on Robotics King-Sun Fu Memorial Best Paper Award, the Best Paper award at WAFR 2016, the Best Student Paper award at the 2018 Symposium on VLSI Circuits, and was best paper finalist at RSS 2015.
\end{IEEEbiography}
	\clearpage
	\setcounter{page}{1}
	\appendices

\section{Proof of Theorem~\autoref{thm-scalar-total}}\label{app:proof-thm-scalar-total}

The steady-state error variance for the outdated estimate $ \hat{x}_{t-\delayCont_\textit{tot}}(\delayCont) $ is the solution of the continuous-time ARE where all sensors are considered:
\begin{equation}
2ap_\infty(\delayCont) - \sigma^2_w + \frac{\delayCont}{\tilde{b}}p^2_\infty(\delayCont) = 0
\end{equation}	
An open-loop prediction of length $ \delayCont_{\textit{tot}} $ then computes the current-time estimate $\hat{x}_{t}(\delayCont)$.
The error associated with the prediction has dynamics
\begin{equation}
d\Tilde{x}_{s}(\delayCont) = a\Tilde{x}_{s}(\delayCont)ds + dw_{s}, \qquad t-\delayCont_\textit{tot}\le s\le t
\label{open-loop-error-dynamics-scalar}
\end{equation}
The error at time $ t $ is then given by integrating~\eqref{open-loop-error-dynamics-scalar} with initial condition $\Tilde{x}_{t-\delayCont_\textit{tot}}(\delayCont)$:
\begin{equation}
\Tilde{x}_{t}(\delayCont) = \mbox{e}^{a\delayCont_\textit{tot}}\Tilde{x}_{t-\delayCont_\textit{tot}}(\delayCont)+\Bar{w}(\delayCont_\textit{tot})
\end{equation}
where $\Bar{w}(\delayCont_\textit{tot})$ is the stochastic integral of $ w_s $ in the interval $[t-\delayCont_\textit{tot},\: t]$. The steady-state error variance is then 
\begin{align}
\begin{split}
\psteady(\delayCont) & \overset{(i)}{=} \mbox{var}(\mbox{e}^{a\delayCont_\textit{tot}}\Tilde{x}_{t-\delayCont_\textit{tot}}(\delayCont)) + \mbox{var}(\Bar{w}(\delayCont_\textit{tot})) =\\
& = \mbox{e}^{2a\delayCont_\textit{tot}}p_\infty(\delayCont)+\frac{\sigma^2_w}{2a}\left(\mbox{e}^{2a\delayCont_\textit{tot}}-1\right)
\end{split}
\end{align}
where $(i)$ is motivated by uncorrelated terms. Indeed, $\tilde{x}_{t-\delayCont_\textit{tot}} \in \mbox{span}\{x_{t_0}, w_s, v_s : t_0 \le s\le t-\delayCont_\textit{tot}\}$, while $\Bar{w}(\delayCont_\textit{tot}) \in \mbox{span}\{w_s : t-\delayCont_\textit{tot} \le s \le t\}$,
whose intersection has zero measure.\\
The variance $ \psteady(\delayCont) $ is quasi-convex with both constant and $\delayCont$-varying communication and fusion delays. This can be proved, e.g., with a graphical analysis. In virtue of both this fact and limits~\eqref{error-variance-limits-scalar}, the point of minimum $\delayCont_\textit{opt}$ exists unique and is strictly positive.\\
With constant delays $\delayCommCont$, $\delayFusCont$, standard computations show that $\delayCont_\textit{opt}$ must satisfy~\cref{3rd-degree-eq-scalar-general}.

\section{Alternative Preprocessing Models}\label{sec:other-functions}

We consider two alternative models to the measurement variance~\eqref{R-model-cont-time-hom-net}.
These involve a coefficient $\gamma$ that can be understood as the convergence rate of an anytime algorithm.\\
\begin{cor}[Non-ideal preprocessing]
	Given system~\eqref{eq:processModelCont}--\eqref{homogeneous-network} and hypotheses as per~\cref{thm-scalar-total} with 
	\begin{equation}
		\tilde{R}(\delayCont)=\frac{b}{\delayCont^{\gamma}}I_m \qquad \gamma> 0
	\end{equation}
	the steady-state error variance $\psteady(\delayCont)$ has a unique global minimum $\delayCont_\textit{opt}>0$.
\end{cor}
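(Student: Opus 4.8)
The plan is to mirror the proof of~\cref{thm-scalar-total} line by line, since under the new noise model $\tilde{R}(\delayCont)=\nicefrac{b}{\delayCont^{\gamma}}I_m$ the only quantity that changes is the outdated-estimate variance $p_{\infty}$. The estimation pipeline is untouched: the estimate available at time $\delayCont-\delayCont_{\textit{tot}}$ solves the continuous-time ARE with all sensors active, and an open-loop prediction of length $\delayCont_{\textit{tot}}$ produces the current-time estimate, so the two-term decomposition $\psteady(\delayCont)=\mbox{e}^{2a\delayCont_{\textit{tot}}}p_{\infty}(\delayCont)+\frac{\sigma^2_w}{2a}(\mbox{e}^{2a\delayCont_{\textit{tot}}}-1)$ of~\eqref{eq:psteadyCont} is preserved. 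The aggregate sensor information now scales as $\nicefrac{\delayCont^{\gamma}}{\tilde b}$ rather than $\nicefrac{\delayCont}{\tilde b}$, so $p_{\infty}$ is obtained from the one in~\cref{thm-scalar-total} by the substitution $\delayCont\mapsto\delayCont^{\gamma}$ in the information term, giving $p_{\infty}(\delayCont)=\frac{\tilde b}{\delayCont^{\gamma}}\bigl(a+\sqrt{a^2+\frac{\sigma^2_w}{\tilde b}\delayCont^{\gamma}}\bigr)$, which is smooth, positive, and strictly decreasing on $\Realp{}$ and satisfies the ARE identity $2ap_{\infty}+\sigma^2_w=\frac{\delayCont^{\gamma}}{\tilde b}p_{\infty}^2$.

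Second, I would verify that the boundary limits~\eqref{error-variance-limits-scalar} are unaffected by $\gamma$. Using $p_{\infty}(\delayCont)\sim\sqrt{\tilde b}\,\sigma_w\,\delayCont^{-\gamma/2}\to0$ as $\delayCont\to+\infty$, and $p_{\infty}(\delayCont)\to\nicefrac{\sigma^2_w}{2|a|}$ for $a<0$ (respectively $p_{\infty}\to+\infty$ for $a\ge0$) as $\delayCont\to0^+$, together with the limits of $\mbox{e}^{2a\delayCont_{\textit{tot}}}$ under either delay law~\eqref{comm-fus-del-const}--\eqref{comm-fus-del-var}, a direct substitution reproduces the right-hand side of~\eqref{error-variance-limits-scalar}. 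This step guarantees that no minimizing sequence escapes to the boundary, so any interior stationary point exhibited later is automatically the global minimizer.

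The core step is quasi-convexity, which I would obtain from a single sign change of $\psteady'$. Differentiating the decomposition and inserting the ARE identity together with the implicit derivative $p_{\infty}'=-\frac{\gamma\delayCont^{\gamma-1}p_{\infty}^2/\tilde b}{D}$, where $D\doteq2\sqrt{a^2+\frac{\sigma^2_w}{\tilde b}\delayCont^{\gamma}}>0$, one gets $\psteady'(\delayCont)=\mbox{e}^{2a\delayCont_{\textit{tot}}}\,\frac{\delayCont^{\gamma}p_{\infty}^2}{\tilde b}\bigl(\delayCont_{\textit{tot}}'-\frac{\gamma}{\delayCont D}\bigr)$, so the sign of $\psteady'$ is that of the scalar factor $\delayCont_{\textit{tot}}'-\nicefrac{\gamma}{\delayCont D}$. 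For the constant-delay model~\eqref{comm-fus-del-const} we have $\delayCont_{\textit{tot}}'\equiv1$, and the sign coincides with that of $\delayCont D(\delayCont)-\gamma$, where $\delayCont D(\delayCont)=2\sqrt{a^2\delayCont^2+\frac{\sigma^2_w}{\tilde b}\delayCont^{2+\gamma}}$ is a strictly increasing bijection of $\Realp{}$ onto itself; hence $\psteady'$ vanishes at a single point, is negative before and positive after it, so $\psteady$ is strictly decreasing then strictly increasing, with the unique minimizer $\delayCont_{\textit{opt}}$ solving $\frac{\sigma^2_w}{\tilde b}\delayCont_{\textit{opt}}^{2+\gamma}=-a^2\delayCont_{\textit{opt}}^2+\frac{\gamma^2}{4}$ (which collapses to~\eqref{3rd-degree-eq-scalar-general} at $\gamma=1$).

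I expect the $\delayCont$-varying model~\eqref{comm-fus-del-var} to be the main obstacle. There $\delayCont_{\textit{tot}}'=1-\nicefrac{K}{\delayCont^2}$ with $K\doteq c+f\sensNum$, and the sign of $\psteady'$ reduces to that of $\Xi(\delayCont)\doteq\delayCont^2-K-\frac{\gamma\delayCont}{D(\delayCont)}$, which is no longer manifestly monotone. The plan is to show that $\Xi$ crosses zero only once: one readily checks $\Xi(0^+)<0$ and $\Xi(+\infty)=+\infty$, so the remaining --and delicate-- point is to prove that $\Xi$ is itself unimodal (decreasing then increasing), ruling out spurious sign changes; this is precisely the analytical counterpart of the ``graphical analysis'' invoked in~\cref{thm-scalar-total}. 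Once the single sign change of $\psteady'$ is established in both delay models, combining it with the boundary limits of the second step yields the claimed unique global minimum at $\delayCont_{\textit{opt}}>0$.
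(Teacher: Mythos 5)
Your route is considerably more rigorous than the paper's own proof, which consists of a single sentence asserting that the limits~\eqref{error-variance-limits-scalar} still hold and that $\psteady(\delayCont)$ is strictly quasi-convex ``via graphical analysis'' under both delay models. Your first three steps are correct and complete: the decomposition~\eqref{eq:psteadyCont} is unaffected, $p_{\infty}(\delayCont)=\frac{\tilde b}{\delayCont^{\gamma}}\bigl(a+\sqrt{a^2+\frac{\sigma^2_w}{\tilde b}\delayCont^{\gamma}}\bigr)$ is the right generalization, the boundary limits survive for every $\gamma>0$, and for constant delays~\eqref{comm-fus-del-const} your sign factor $\delayCont D(\delayCont)-\gamma$, with $\delayCont D(\delayCont)=2\sqrt{a^2\delayCont^2+\frac{\sigma^2_w}{\tilde b}\delayCont^{2+\gamma}}$ strictly increasing from $0$ to $+\infty$, yields the single sign change and the stationarity condition $\frac{\sigma^2_w}{\tilde b}\delayCont_{\textit{opt}}^{2+\gamma}=-a^2\delayCont_{\textit{opt}}^2+\frac{\gamma^2}{4}$, which indeed collapses to~\eqref{3rd-degree-eq-scalar-general} at $\gamma=1$.

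The genuine gap is the $\delayCont$-varying model~\eqref{comm-fus-del-var}: you leave unproven that $\Xi(\delayCont)=\delayCont^2-K-\frac{\gamma\delayCont}{D(\delayCont)}$ changes sign only once, so as written the corollary is only established for one of the two delay models it covers. The difficulty, however, is an artifact of your multiplication by $\delayCont^2$, which destroys monotonicity. Keep the unmultiplied factor instead: by your own derivative formula, $\psteady'(\delayCont)$ has the sign of $g(\delayCont)\doteq\delayCont_{\textit{tot}}'(\delayCont)-\frac{\gamma}{\delayCont D(\delayCont)}=1-\frac{K}{\delayCont^2}-\frac{\gamma}{\delayCont D(\delayCont)}$, with $K=c+f\sensNum$. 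Since $\delayCont D(\delayCont)$ is strictly increasing on $\Realp{}$ (exactly the fact you used in the constant-delay case), both $-\frac{K}{\delayCont^2}$ and $-\frac{\gamma}{\delayCont D(\delayCont)}$ are strictly increasing, hence $g$ is strictly increasing, with $g(0^+)=-\infty$ and $g(+\infty)=1$. Therefore $g$ has exactly one zero, is negative before it and positive after it, which is the single sign change you need; combined with your boundary limits this gives the unique global minimizer $\delayCont_{\textit{opt}}>0$. Note this argument handles both delay models uniformly (set $K=0$ to recover~\eqref{comm-fus-del-const}), so it closes the proof and in fact supplies the analytic justification the paper itself omits.
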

\begin{proof}
	It can be seen that limits (\ref{error-variance-limits-scalar}) still hold and $ \psteady(\delayCont) $ is strictly quasi-convex on $\Realp{}$ (\eg via graphical analysis) with both models~\eqref{comm-fus-del-const}--\eqref{comm-fus-del-var}.
\end{proof}
The second model comes into play with anytime algorithms with exponential convergence, as the ones shown in~\cite{10.1007/978-3-642-37213-1_16}. 
\begin{cor}[Exponential-convergence anytime algorithms]
	Given system~\eqref{eq:processModelCont}--\eqref{homogeneous-network} and hypotheses as per Theorem~\ref{thm-scalar-total} with 
	\begin{equation}
		\tilde{R}(\delayCont)=b\mbox{e}^{-\gamma\delayCont}I_m \qquad \gamma> 0
	\end{equation}
	the steady-state error variance $\psteady(\delayCont)$ has a unique global minimum $\delayCont_\textit{opt}>0$:
	\begin{enumerate}[leftmargin=*]
		\item with constant delays as per~\eqref{comm-fus-del-const}, if and only if 
		\begin{equation}
		\gamma > 2\sqrt{\frac{\sigma^2_w}{\tilde{b}}+a^2}  \label{eq:condition-gamma}
		\end{equation}
		\item with $\delayCont$-varying delays as per~\eqref{comm-fus-del-var}, always.
	\end{enumerate}
\end{cor}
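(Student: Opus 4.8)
The plan is to mirror the derivation of \cref{thm-scalar-total}, replacing the measurement-information term and then tracking how the exponential noise decay competes with the open-loop growth factor $\mbox{e}^{2a\delayCont_{\textit{tot}}}$. First I would recompute the steady-state variance of the \emph{outdated} estimate: with the $\sensNum$-sensor homogeneous model and noise $\tilde R(\delayCont)=b\mbox{e}^{-\gamma\delayCont}I_m$, the aggregate information becomes $\mbox{e}^{\gamma\delayCont}/\tilde b$ (in place of $\delayCont/\tilde b$), so the continuous-time ARE reads $\mbox{e}^{\gamma\delayCont}\tilde b^{-1}p_\infty^2+2ap_\infty-\sigma^2_w=0$ and yields $p_\infty(\delayCont)=\tilde b\,\mbox{e}^{-\gamma\delayCont}\bigl(a+\sqrt{a^2+\tilde b^{-1}\mbox{e}^{\gamma\delayCont}\sigma^2_w}\bigr)$. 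Propagating this covariance in open loop over $\delayCont_{\textit{tot}}$, exactly as in the proof of \cref{thm-scalar-total}, preserves the two-term structure $\psteady(\delayCont)=\mbox{e}^{2a\delayCont_{\textit{tot}}}p_\infty(\delayCont)+\tfrac{\sigma^2_w}{2a}\bigl(\mbox{e}^{2a\delayCont_{\textit{tot}}}-1\bigr)$, with $\delayCont_{\textit{tot}}$ as in~\eqref{total-delay-hom} under the two delay models~\eqref{comm-fus-del-const}--\eqref{comm-fus-del-var}.

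Next I would establish quasi-convexity of $\psteady$ on $\Realp{}$ by the same sign/graphical analysis used for \cref{thm-scalar-total}, which reduces ``unique global minimum at $\delayCont_{\textit{opt}}>0$'' to a statement about the boundary behaviour of $\psteady'$. The decisive contrast between the two models is the left-endpoint limit of $\delayCont_{\textit{tot}}$: with $\delayCont$-varying delays $\delayCont_{\textit{tot}}=\delayCont+(c+f\sensNum)/\delayCont\to+\infty$ as $\delayCont\to0^+$, whereas with constant delays $\delayCont_{\textit{tot}}(0^+)$ is finite, so $p_\infty(0^+)$ and $\psteady(0^+)$ are finite too (note that, unlike~\eqref{R-model-cont-time-hom-net}, the exponential model has finite noise at $\delayCont=0$).

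For the $\delayCont$-varying case I would argue the minimum is always interior: since $\delayCont_{\textit{tot}}\to+\infty$ at both endpoints, $\psteady\to+\infty$ when $a\ge0$; when $a<0$ I would use the identity $\psteady(\delayCont)-\tfrac{\sigma^2_w}{2|a|}=\mbox{e}^{2a\delayCont_{\textit{tot}}}\bigl(p_\infty(\delayCont)+\tfrac{\sigma^2_w}{2a}\bigr)$ together with $p_\infty(\delayCont)<\tfrac{\sigma^2_w}{2|a|}$ (measurements strictly reduce the open-loop variance) to show $\psteady$ dips strictly below its common boundary value $\tfrac{\sigma^2_w}{2|a|}$, forcing an interior minimum regardless of $\gamma$. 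For the constant-delay case the existence of an interior minimum is governed by the sign of $\psteady'(0^+)$; since here $\mathrm{d}\delayCont_{\textit{tot}}/\mathrm{d}\delayCont\equiv1$, one gets $\psteady'=\mbox{e}^{2a\delayCont_{\textit{tot}}}\bigl(2ap_\infty+p_\infty'+\sigma^2_w\bigr)$, and implicit differentiation of the ARE gives $p_\infty'(0)=-\gamma\tilde b(a+s)^2/(2s)$ with $s\doteq\sqrt{a^2+\sigma^2_w/\tilde b}$, while $2ap_\infty(0)+\sigma^2_w=\tilde b(a+s)^2$; the bracket at $\delayCont=0$ therefore equals $\tilde b(a+s)^2\bigl(1-\gamma/(2s)\bigr)$, which is negative exactly when $\gamma>2s$, i.e.~when~\eqref{eq:condition-gamma} holds.

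The main obstacle I anticipate is not the algebra but pinning the threshold down \emph{sharply} and globally. Two points need care. First, the collapse to the clean square $2ap_\infty(0)+\sigma^2_w=\tilde b(a+s)^2$, and hence the tidy factor $1-\gamma/(2s)$, relies on selecting the correct information-form root $p_\infty=\tilde b\mbox{e}^{-\gamma\delayCont}(a+\sqrt{\cdots})$ with the $+a$ sign; a sign slip here would spoil the identification of the threshold. Second, in the stable case $a<0$ with constant delays both $\psteady(0^+)$ and $\psteady(+\infty)=\tfrac{\sigma^2_w}{2|a|}$ are finite, so a purely local slope computation is not conclusive by itself---I would close the argument by combining the sign of $\psteady'(0^+)$ with the established quasi-convexity to rule out a boundary minimizer, thereby showing $\gamma>2\sqrt{\sigma^2_w/\tilde b+a^2}$ is both necessary and sufficient.
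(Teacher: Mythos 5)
Your proposal is correct, and all the key computations check out: with $s\doteq\sqrt{a^2+\sigma^2_w/\tilde{b}}$ one indeed gets $2ap_\infty(0)+\sigma^2_w=\tilde{b}(a+s)^2$ and $p_\infty'(0)=-\gamma\tilde{b}(a+s)^2/(2s)$, so the sign of $\psteady'(0^+)$ is that of $1-\gamma/(2s)$, which is negative exactly under~\eqref{eq:condition-gamma}. The paper reaches the same threshold by a slightly different execution of the same first-order condition: instead of evaluating the derivative at the origin, it solves $\psteady'(\delayCont)=0$ in closed form, obtaining the unique stationary point $\delayCont_\textit{opt}=\frac{1}{\gamma}\left[\ln\left(\frac{\gamma^2}{4}-a^2\right)+\ln\left(\frac{\tilde{b}}{\sigma^2_w}\right)\right]$, and reads~\eqref{eq:condition-gamma} off its positivity; the two are equivalent because your bracket at general $\delayCont$ equals $\tilde{b}\mbox{e}^{-\gamma\delayCont}(a+s(\delayCont))^2\left(1-\frac{\gamma}{2s(\delayCont)}\right)$ with $s(\delayCont)$ strictly increasing, so it vanishes precisely where $s(\delayCont)=\gamma/2$, i.e.\ at the paper's $\delayCont_\textit{opt}$. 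The paper's route buys the explicit optimal delay as a byproduct; your route buys a cleaner if-and-only-if argument and, importantly, an explicit treatment of the boundary subtlety in the stable case $a<0$ with constant delays (both $\psteady(0^+)$ and $\psteady(+\infty)$ finite), which the paper leaves implicit. For the $\delayCont$-varying case~\eqref{comm-fus-del-var} your argument is in fact more rigorous than the paper's, which only asserts quasi-convexity ``via graphical analysis'': your endpoint limits $\delayCont_{\textit{tot}}\to+\infty$ plus the identity $\psteady(\delayCont)-\frac{\sigma^2_w}{2|a|}=\mbox{e}^{2a\delayCont_{\textit{tot}}}\left(p_\infty(\delayCont)-\frac{\sigma^2_w}{2|a|}\right)<0$ (which follows directly from the ARE) pin down an interior minimizer for every $\gamma$. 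One minor flag: the ARE you wrote, $\mbox{e}^{\gamma\delayCont}\tilde{b}^{-1}p_\infty^2+2ap_\infty-\sigma^2_w=0$, is not the one solved by your stated root $\tilde{b}\mbox{e}^{-\gamma\delayCont}\bigl(a+\sqrt{\cdot}\bigr)$; the consistent pairing is $2ap_\infty+\sigma^2_w-\mbox{e}^{\gamma\delayCont}\tilde{b}^{-1}p_\infty^2=0$ with the $+a$ branch (the paper's own appendix contains the same sign slip relative to the $p_\infty$ formula in \cref{thm-scalar-total}). Since all your downstream algebra uses the correct root --- exactly the point you flagged as critical --- the threshold comes out right; also note that your bracket formula at general $\delayCont$, combined with monotonicity of $s(\delayCont)$, would let you prove the unimodality you currently import from ``graphical analysis,'' making the argument fully self-contained.
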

\begin{proof}
	We address the two cases separately.
	\begin{enumerate}[leftmargin=*]
		\item With model~\eqref{comm-fus-del-const}, $\delayCont_\textit{opt}$ can be computed in closed form by setting $ \psteady'(\delayCont) = 0 $. This has the unique solution
		\begin{equation}
		\delayCont_\textit{opt} = \frac{1}{\gamma}\left[\ln\left(\frac{\gamma^2}{4}-a^2\right)+\ln\left(\frac{\tilde{b}}{\sigma^2_w}\right)\right]
		\end{equation}
		which is strictly positive if and only if~\eqref{eq:condition-gamma} holds.
		\item With model~\eqref{comm-fus-del-var}, $ \psteady(\delayCont)$ is quasi-convex (easily verifiable, e.g., via graphical analysis) for any $ \gamma $.
	\end{enumerate}
\end{proof}

\section{Proof of~\cref{prop-params}} \label{app:proof-prop-params}

For convenience, we recall the statement of the implicit function theorem, which is used in the proof.

\begin{thm}[Implicit function]\label{thm:Dini}
	Let F be a continuously differentiable function on some open $D \subset \mathbb{R}^2$. Assume that there exists a point $(\bar{x},\bar{y}) \in D$ such that:
	\begin{itemize}
		\item $F(\bar{x},\bar{y})=0$;
		\item $\frac{\partial F}{\partial y}(\bar{x},\bar{y})\neq 0$.
	\end{itemize}
	Then, there exist two positive constant a, b and a function $f : I_{\bar{x}}:=(\bar{x}-a,\bar{x}+a) \mapsto J_{\bar{y}}:=(\bar{y}-b,\bar{y}+b)$ such that
	\[F(x,y)=0 \iff y = f(x) \quad \forall x \in I_{\bar{x}}, \ \forall y \in J_{\bar{y}}\]
	Moreover, $f \in \mathcal{C}^1(I_{\bar{x}})$ and
	\begin{equation}\label{eq:DiniDer}
	f'(x) = -\frac{F_x(x,f(x))}{F_y(x,f(x))} \quad \forall x \in I_{\bar{x}}
	\end{equation}
	where $F_x(x,f(x)) = \frac{\partial F}{\partial x}(x,f(x))$.
\end{thm}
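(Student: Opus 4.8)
The plan is to prove the theorem by the elementary route of monotonicity combined with the intermediate value theorem, rather than by a fixed-point argument, since here $F$ is scalar-valued on a two-dimensional domain. First I would reduce to the case $\frac{\partial F}{\partial y}(\bar x,\bar y)>0$: otherwise I replace $F$ by $-F$, which alters neither its zero set nor the hypotheses nor the conclusion. By continuity of $F_y$ there is a closed rectangle $[\bar x-a_0,\bar x+a_0]\times[\bar y-b,\bar y+b]\subset D$ on which $F_y>0$. Consequently, for each fixed $x$ in this range, the map $y\mapsto F(x,y)$ is strictly increasing, which is the key structural fact driving the rest of the argument.

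Next I would construct $f$. Since $F(\bar x,\bar y)=0$ and $F(\bar x,\cdot)$ is strictly increasing, we have $F(\bar x,\bar y-b)<0<F(\bar x,\bar y+b)$. By continuity of $F$ in the first argument these strict inequalities persist for all $x$ in some interval $I_{\bar x}=(\bar x-a,\bar x+a)$ with $a\le a_0$, i.e.\ $F(x,\bar y-b)<0<F(x,\bar y+b)$. Fixing such an $x$, the intermediate value theorem produces a zero $y\in(\bar y-b,\bar y+b)$, and strict monotonicity in $y$ makes it unique. Declaring $f(x)$ to be this unique zero yields the stated equivalence $F(x,y)=0\iff y=f(x)$ on $I_{\bar x}\times J_{\bar y}$, with $f$ well defined.

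I would then obtain continuity of $f$ by repeating the existence-and-uniqueness argument localized at an arbitrary point $(x_0,f(x_0))$, with $b$ replaced by an arbitrarily small $\varepsilon$: this yields a neighborhood of $x_0$ on which $|f(x)-f(x_0)|<\varepsilon$. For differentiability and the formula, I fix $x\in I_{\bar x}$, take $h$ small, and split the identity $0=F(x+h,f(x+h))-F(x,f(x))$ as $[F(x+h,f(x+h))-F(x,f(x+h))]+[F(x,f(x+h))-F(x,f(x))]$. Applying the one-variable mean value theorem to each bracket gives $F_x(\xi,f(x+h))\,h+F_y(x,\eta)\,(f(x+h)-f(x))=0$ for intermediate points $\xi,\eta$ inside the rectangle. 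Since $F_y\neq0$ there, I solve for the difference quotient and let $h\to0$; using the already-established continuity of $f$ (so $f(x+h)\to f(x)$) and the continuity of $F_x,F_y$, the quotient converges to $-F_x(x,f(x))/F_y(x,f(x))$, proving both differentiability and formula~\eqref{eq:DiniDer}. Continuity of this right-hand side then gives $f\in\mathcal{C}^1(I_{\bar x})$.

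The main obstacle is the differentiability step, where two points require care: I must establish continuity of $f$ \emph{first}, so that the $y$-increment $f(x+h)-f(x)$ vanishes as $h\to0$ and the limit of the quotient is the claimed expression; and I must ensure the intermediate points $\xi,\eta$ remain inside the rectangle on which $F_y$ is bounded away from zero, so that the division is legitimate and the limit is well behaved. The existence and uniqueness of $f$, by contrast, follow cleanly once the local monotonicity in $y$ is in hand.
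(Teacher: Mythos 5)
Your proof is correct, but note that the paper does not actually prove this statement: Theorem~2 is recalled verbatim as the classical implicit function theorem (Dini's theorem), stated without proof in the appendix solely so that its derivative formula can be invoked in the proof of Proposition~1. There is therefore no paper proof to compare against; what you have written is the standard elementary two-dimensional argument --- essentially Dini's original one --- and all its steps check out. In particular, you correctly reduce to $F_y>0$ on a compact rectangle, get existence from the intermediate value theorem and uniqueness from strict monotonicity in $y$, establish continuity of $f$ by relocalizing the same argument at $(x_0,f(x_0))$ with an arbitrary $\varepsilon$ (uniqueness on the larger rectangle is what identifies the locally found zero with $f(x)$, which your argument implicitly and legitimately uses), and obtain differentiability by splitting the increment and applying the one-variable mean value theorem to each piece, with the intermediate points staying in the rectangle where $F_y$ is bounded below by its positive minimum, so the division and the passage to the limit are sound. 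The two delicate points you flag --- continuity of $f$ must precede the difference-quotient limit, and the intermediate points must remain where $F_y$ is bounded away from zero --- are exactly the right ones, and you handle both.
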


Consider now~\eqref{3rd-degree-eq-scalar-general}, which we rewrite as:
\begin{equation}\label{eq:3rdDegree}
	\rho\delayCont_{\textit{{\textit{opt}}}}^3 + a^2\delayCont_{\textit{{\textit{opt}}}}^2-\frac{1}{4} = 0
\end{equation}
We can see the left-hand term in the previous equation as a parametric function of two positive-valued variables, namely
\begin{equation}
 F: \mathbb{R}_+ \times \mathbb{R}_+ \rightarrow \mathbb{R}, \ (\pi,\delayCont) \mapsto F(\pi,\delayCont) = \rho\delayCont^3 + a^2\delayCont^2-\frac{1}{4}
\end{equation}
where $\pi$, which is either $\rho $ or $a^2$, is a variable, and the other coefficient is a parameter. 
Given a solution $(\bar{\pi}, \bar{\delayCont}_\textit{opt})$ of~\eqref{eq:3rdDegree}, it holds:
\begin{itemize}[leftmargin=*]
	\item $F(\bar{\pi}, \bar{\delayCont}_\textit{opt}) = 0$, by construction;
	\item $F_{\delayCont}(\bar{\rho}, \bar{\delayCont}_\textit{opt}) = 3\bar{\rho}\bar{\delayCont}_\textit{opt}^2 +2{a}^2\bar{\delayCont}_\textit{opt} > 0$, as $ \bar{\rho}, \delayCont_\textit{opt} > 0$;
	\item $F_{\delayCont}(\bar{a^2}, \bar{\delayCont}_\textit{opt}) = 3{\rho}\bar{\delayCont}_\textit{opt}^2 +2\bar{a^2}\bar{\delayCont}_\textit{opt} > 0$, as $ \rho, \delayCont_\textit{opt} > 0$.
\end{itemize}
Then~\cref{thm:Dini} applies and there exists a function $\delayCont(\pi)$ such that $F(\pi,\delayCont_\textit{opt}) = 0 \iff \delayCont_\textit{opt} = \delayCont(\pi)$, with $\pi$ in some open neighbourhood of $\bar{\pi}$. 
Since we did not pose constraints on $ \bar{\pi} $, such a function is defined on the positive real line.
We can then compute the first derivative of $\delayCont(\pi)$ according to~\eqref{eq:DiniDer}.
\begin{description}[leftmargin=*]
	\item[$\boldsymbol{\pi = \rho}$] The first derivative of $\delayCont(\pi) = \delayCont(\rho)$ is
	\begin{equation}
	\delayCont'(\rho) = -\dfrac{F_{\rho}(\rho,\delayCont(\rho))}{F_{\delayCont}(\rho,\delayCont(\rho))} = -\dfrac{{\delayCont(\rho)}^2}{3{\rho}{\delayCont(\rho)} +2a^2} < 0
	\end{equation}
	
	\item[$\boldsymbol{\pi = a^2}$] The first derivative of $\delayCont(\pi) = \delayCont(a^2)$ is
	\begin{equation}
	\delayCont'(a^2) = -\dfrac{F_{a^2}(a^2,\delayCont(a^2))}{F_{\tau}(a^2,\delayCont(a^2))} = -\dfrac{\delayCont(a^2)}{3\rho{\delayCont(a^2)} +2a^2} < 0
	\end{equation}
	
\end{description}
Hence, $\delayCont_\textit{opt}$ is strictly decreasing with both $ \rho $ and $a^2$.
\revision{
\section{Sensor Fusion with Kalman Filter in Information Form and Packet Loss}\label{app:pktLoss}

In the following, we drop the dependencies on preprocessing delays for the sake of exposition.
According to~\cite{schenatoKalmanFusion}, when the correct reception of a measurement from $ i $-th sensor is a binary random variable with success probability $ \lambda_i $, the optimal steady-state estimator with constant gains has the following dynamics for the expected error covariance:
\begin{equation}\label{eq:AREPktLoss}
P = APA^T + Q - APC^T_\lambda\left(C_\lambda PC^T_\lambda+P_\lambda + R_\lambda\right)^{-1}C_\lambda PA^T
\end{equation}
with 
\begin{equation}\label{eq:AREPktLossParams}
	\begin{aligned}
		C_\lambda &= \left[ \lambda_1C_1^T
		\dots
		\lambda_{\tilde{\cardS}}C_{\tilde{\cardS}}^T \right]^T\\
		P_\lambda &= \mbox{diag}\left(\lambda_1(1-\lambda_1)C_1PC_1^T,...,\lambda_{\tilde{\cardS}}\left(1-\lambda_{\tilde{\cardS}}\right)C_{\tilde{\cardS}}PC_{\tilde{\cardS}}^T\right)\\
		R_\lambda &= \mbox{diag}\left(\lambda_1R_1,...,\lambda_{\tilde{\cardS}}R_{\tilde{\cardS}}\right)
	\end{aligned}
\end{equation}
where $ \mbox{diag}(\cdot) $ denotes a block-diagonal matrix with variables as diagonal blocks,
and $ \{1,...\tilde{\cardS}\} $ are the sensors involved in the measurement update.
\cref{eq:AREPktLoss} can be rewritten as follows, exploiting the matrix inversion lemma:
\begin{equation}\label{eq:AREPktLossInfoForm}
P = A\left(P^{-1} + C_\lambda^T\left(P_\lambda+R_\lambda\right)^{-1}C_\lambda\right)^{-1}A^T + Q
\end{equation}
where we define the modified information matrix as 
\begin{equation}\label{eq:infoMatrixPktLoss1}
\tilde{\Gamma} = C_\lambda^T\left(P_\lambda+R_\lambda\right)^{-1}C_\lambda
\end{equation}
We then get~\cref{eq:infoMatrixPktLoss2}, where $ (i) $ follows from the matrix inversion lemma, and $ (ii) $ from the definition of $ \Gamma_i $.}
\begin{figure*}[b]
	\centering
	\noindent\makebox[\linewidth]{\rule{\linewidth}{0.4pt}}
	\revision{\begin{equation}\label{eq:infoMatrixPktLoss2}
		\begin{aligned}
		\tilde{\Gamma} &= \sum_{i\in\mathcal{S}(\delayDisc{})}
			\lambda_i^2C_i^T\left(\lambda_i(1-\lambda_i)C_iPC_i^T + \lambda_iR_i\right)^{-1}C_i = \\
		&= \sum_{i\in\mathcal{S}(\delayDisc{})}
			\lambda_i^2C_i^T\left[\lambda_i(1-\lambda_i)\left(C_iPC_i^T + \dfrac{R_i}{1-\lambda_i}\right)\right]^{-1}C_i =\\ 
		&\stackrel{(i)}{=} \sum_{i\in\mathcal{S}(\delayDisc{})}
			\lambda_iC_i^T\left[R_i^{-1}-(1-\lambda_i)R_i^{-1}C_i\left(P^{-1}+\left(1-\lambda_i\right)C_i^TR_i^{-1}C_i\right)^{-1}C^T_iR^{-1}_i\right]C_i=\\
		&=\sum_{i\in\mathcal{S}(\delayDisc{})}
			\lambda_i\left[C_i^TR_i^{-1}C_i-\left(1-\lambda_i\right)C_i^TR_i^{-1}C_i\left(P^{-1}+\left(1-\lambda_i\right)C_i^TR_i^{-1}C_i\right)^{-1}C_i^TR_i^{-1}C_i\right]=\\
		&\stackrel{(ii)}{=}\sum_{i\in\mathcal{S}(\delayDisc{})}
			\lambda_i\left[\Gamma_i-\left(1-\lambda_i\right)\Gamma_i\left(P^{-1}+\left(1-\lambda_i\right)\Gamma_i\right)^{-1}\Gamma_i\right]
		\end{aligned}
	\end{equation}}
\end{figure*}


\section{Proof of~\cref{thm:costFunction}}\label{app:proofThmCostFunction}

According to~\cite[Section 3]{schenato2008}, the estimation starts from the most recent state for which the maximum information possible is available.
The former has timestamp $ k-\delayDiscTot{\cardS}-\delayFusDiscTot $, being $ \delayDiscTot{\cardS} $ the delay gathered by the most-delayed-sensor data when they are received at the central station.
The \revision{expected} error covariance for such estimate converges to the solution of the ARE~\eqref{eq:ARE} where all sensors are considered, that is, at steady state the following holds:
\begin{multline}\label{eq:AREApp}
P_{k-\delayFusDiscTot-\delayDiscTot{\cardS}|k-1-\delayFusDiscTot-\delayDiscTot{\cardS}}(\delaySetDisc{}{}) = \\
P_{k-\delayFusDiscTot-\delayDiscTot{\cardS}+1|k-\delayFusDiscTot-\delayDiscTot{\cardS}}(\delaySetDisc{}{}) = P_\infty(\delaySetDisc{}{})
\end{multline}
When computing the state estimates of more recent times, only data from some sensors are available for fusion.
In particular, the measurement update for the estimate of the state with delay $ \delay + \delayFusDiscTot $ can only use sensors in $ \mathcal{S}(\delay) $:
\begin{equation}
P_{k-\delayFusDiscTot-\delay|k-\delayFusDiscTot-\delay}\left(\delaySetDisc{}{}\right)  = \update\left(P_{k-\delayFusDiscTot-\delay|k-\delayFusDiscTot-\delay-1}\left( \delaySetDisc{}{}\right),\delaySetDisc{\delay}{}\right)
\end{equation}
According to~\cref{ass:sensorSorting}, the multi-step KF iteration processing data in the interval $ \left[ k-\delayFusDiscTot-\delayDiscTot{i+1}+2, \, k-\delayFusDiscTot-\delayDiscTot{i}+1 \right] $ involves the sensor subset $ \mathcal{S}(\delayDiscTot{i}) = \{1,...,i\} $.
The resulting revision{expected} error covariance for such iteration is, according to~\eqref{eq:KalmanMultiUpdate},
\begin{multline}\label{eq:multiStepIter}
		P_{k-\delayFusDiscTot-\delayDiscTot{i}+1|k-\delayFusDiscTot-\delayDiscTot{i}}(\delaySetDisc{}{}) = \\
		= \iteration^{\delayDiscTot{i+1}-\delayDiscTot{i}}\left( P_{k-\delayFusDiscTot-\delayDiscTot{i+1}+1|k-\delayFusDiscTot-\delayDiscTot{i+1}}\left(\delaySetDisc{}{}\right),\delaySetDisc{\delayDiscTot{i}}{}\right)
\end{multline}
The multi-step KF iteration involving all the processed dataset~\eqref{def:datasets} is written as
$ \iteration^{\delayDiscTot{\cardS}-\delayDiscTot{1}}\left(P_{\infty}\left(\delaySetDisc{}{}\right),\delaySetDisc{\delayDiscTot{\cardS}-1}{}\right) $: starting from $ P_{\infty}\left(\delaySetDisc{}{}\right) $, it computes $ P_{k-\delayFusDiscTot-\delayDiscTot{1}+1|k-\delayFusDiscTot-\delayDiscTot{1}}(\delaySetDisc{}{})  $ through the multi-step KF iterations~\eqref{eq:multiStepIter}, each involving one sensor less than the previous one.
The multi-step prediction $ \timeUp^{\delayDisc{\textit{pred}}}(\cdot) $ eventually computes the estimate of the current state, where the remaining delay is $ \delayDisc{\textit{pred}}=\delayFusDiscTot+\delayDiscTot{1}-1 $.

\end{document}